\documentclass[11pt]{article}
\usepackage{amsmath,amssymb,amsthm}

\usepackage{multicol}

\usepackage{color}

\usepackage[latin1]{inputenc}

\topmargin -.5in \oddsidemargin=-18pt \evensidemargin=-18pt
\parindent=1em
\headheight=9pt \headsep=15pt \textwidth= 18 cm 
\textheight 9in

{\catcode `\@=11 \global\let\AddToReset=\@addtoreset}
\AddToReset{equation}{section}

\def\mathsf{\bf}
\def\N{\mathbb{N}}
\def\R{\mathbb{R}}
\def\Z{\mathbb{Z}}

\def\d{\mathrm d}

\def\la{\lambda}
\def\ga{\gamma}
\def\vep{\varepsilon}

\def\E{\mathrm E}

\def\1{{\bf 1}}

\newcommand{\nn}{\nonumber}
\newcommand{\noi}{\noindent}

\newcommand{\mba}{\boldsymbol{a}}
\newcommand{\mbb}{\boldsymbol{b}}
\newcommand{\mbc}{\boldsymbol{c}}

\newcommand{\mbt}{\boldsymbol{t}}
\newcommand{\mbs}{\boldsymbol{s}}
\newcommand{\mbu}{\boldsymbol{u}}

\newcommand{\mbx}{\boldsymbol{x}}
\newcommand{\mby}{\boldsymbol{y}}
\newcommand{\mbz}{\boldsymbol{z}}

\newcommand{\mbgamma}{\boldsymbol{\gamma}}

\def\limd{\renewcommand{\arraystretch}{0.5}
\begin{array}[t]{c}
\stackrel{\rm d}{\longrightarrow} \\
\end{array}\renewcommand{\arraystretch}{1}}

\def\limfdd{\renewcommand{\arraystretch}{0.5}
\begin{array}[t]{c}
\stackrel{\rm fdd}{\longrightarrow} \\
\end{array}\renewcommand{\arraystretch}{1}}

\def\eqfdd{\renewcommand{\arraystretch}{0.5}
\begin{array}[t]{c}
\stackrel{\rm fdd}{=} \\
\end{array}\renewcommand{\arraystretch}{1}}

\def\neqfdd{\renewcommand{\arraystretch}{0.5}
\begin{array}[t]{c}
\stackrel{\rm fdd}{\neq} \\
\end{array}\renewcommand{\arraystretch}{1}}

\vfuzz2pt 
\hfuzz2pt 

\newtheorem{thm}{Theorem}[section]
\newtheorem{cor}[thm]{Corollary}

\newtheorem{prop}[thm]{Proposition}
\newtheorem{defn}[thm]{Definition}
\newtheorem{rem}{Remark}[section]

\begin{document}

\title{Scaling limits of
linear random fields on ${\Z}^2$ \\ with general
dependence axis }

\author{
Vytaut\.e Pilipauskait\.e$^1$  \ and \
Donatas Surgailis$^2$}
\date{\today \\ \small
	\vskip.2cm
$^1$Aarhus University, Department of Mathematics, Ny Munkegade 118, 8000 Aarhus C, Denmark\\
$^2$Vilnius University, Faculty of Mathematics and Informatics, Naugarduko 24, 03225 Vilnius, Lithuania
}
\maketitle

\begin{quote}

{\bf Abstract.}  We discuss anisotropic scaling of long-range dependent linear random fields 
$X$ on $\Z^2$ with arbitrary dependence axis
(direction in the plane along which the moving-average coefficients decay at a smallest rate).
The scaling limits are taken over rectangles  whose sides are parallel to the coordinate axes and
increase as $\lambda $ and $\lambda^\gamma $ when $\lambda \to \infty$, for any $\gamma >0$. The scaling  transition 
occurs
at $\gamma^X_0 >0$ if the scaling limits of $X$ are different and do not depend on $\gamma $ for $\gamma > \gamma^X_0 $ and $\gamma < \gamma^X_0$.
We prove that the fact of `oblique' dependence axis (or incongruous scaling) dramatically changes the
scaling transition in the above model
so that 
$\gamma_0^X = 1$  independently of other parameters, contrasting  the results in
Pilipauskait\.e  and Surgailis (2017) on the scaling transition under
congruous scaling.

\end{quote}

\smallskip
{\small

\noi {\it Keywords:} random field; long-range dependence; dependence axis; anisotropic scaling limits; scaling  transition;
fractional Brownian sheet }

\vskip-1cm

\section{Introduction}

\cite{dam2017, dam2019, pils2014, pils2016, pils2017, ps2015,  ps2016, sur2019a, sur2019b}
discussed scaling limits
\begin{equation}\label{scale}
\big\{ A^{-1}_{\la, \mbgamma} S^X_{\la, {\mbgamma}}({\mbx}), \, \mbx \in \R^\nu_+ \big\}  \limfdd V^X_{\mbgamma},
\quad \lambda \to \infty,
\end{equation}
for some classes of stationary random fields  (RFs) $X = \{X({\mbt}), \, {\mbt}  \in \Z^\nu \}$, 
where $A_{\la, \mbgamma} \to \infty$ 
is a normalization and 
\begin{equation}\label{SXsum}
S^X_{\la,{\mbgamma}}({\mbx}) := \sum_{{\mbt} \in K_{\lambda,\mbgamma}(\mbx) 
} X({\mbt}), \quad \mbx \in \R^\nu_+,
\end{equation}
are partial sums of RF $X$ over rectangles $
K_{\lambda,\mbgamma}(\mbx) := \{{\mbt} =  (t_1, \dots, t_\nu)^\top \in \Z^\nu: 0 < t_i \le \lambda^{\gamma_i} x_i, \,
i=1, \dots, \nu \}$
and ${\mbgamma} = (\gamma_1, \dots, \gamma_\nu)^\top \in \R^\nu_+$ is {\it arbitrary}.
Following \cite{pils2016, sur2019a}  the family
$\{ V^X_{\mbgamma}, \, {\mbgamma} \in \R^\nu_+\} $ of all scaling limits in \eqref{scale} will
be called the {\it scaling diagram of RF $X$}.
Recall that a stationary RF $X$ with $\operatorname{Var}(X(\boldsymbol{0}))<\infty$ is said long-range dependent (LRD) if $\sum_{\boldsymbol{t} \in \Z^\nu} |\operatorname{Cov}(X(\boldsymbol{0}),X(\boldsymbol{t}))| = \infty$, see  \cite{lah2016, ps2016}. 
\cite{pils2017, ps2015, ps2016}
observed that for a large class of 
LRD RFs  $X$ in dimension $\nu =2$, the scaling
diagram essentially consists of three points.
More precisely  (assuming $\gamma_1 = 1, \gamma_2 = \gamma$ w.l.g.),
there exists a (nonrandom) $\gamma^X_0 >0$ such that $V^X_\gamma \equiv V^X_{(1,\gamma)} $ do not depend on $\gamma $ for  $\gamma > \gamma^X_0$ and
$\gamma < \gamma^X_0$, viz.,
\begin{equation}\label{Vlim}
V^X_\gamma = \begin{cases} V^X_+,  &\gamma > \gamma^X_0, \\
V^X_-, &\gamma < \gamma^X_0, \\
V^X_0, &\gamma = \gamma^X_0,
\end{cases}
\end{equation}
and $V^X_+ \neqfdd a V^X_-$, $\forall a >0$.  The above fact was termed the {\it scaling transition} \cite{ps2015, ps2016},
$V^X_0$ called the {\it well-balanced} and $V^X_\pm $ the {\it unbalanced} scaling limits of $X$. In the sequel, we  shall also refer to
$\gamma^X_0 >0$ in \eqref{Vlim} as the {\it scaling transition} or the  {\it critical point}. The
 existence of the scaling transition was established for a wide class of planar linear and nonlinear RF models
 including those appearing in telecommunications and econometrics. See the review paper \cite{sur2019c} for
 further discussion and recent developments.

Particularly, \cite{pils2017} discussed anisotropic scaling of
linear LRD RFs $X$ on $\Z^2$
written as a moving-average
\begin{equation}\label{Xlin}
X({\mbt}) = \sum_{{\mbs} \in \Z^2} a({\mbt}-{\mbs}) \vep({\mbs}), \quad {\mbt} \in \Z^2,
\end{equation}
of standardized i.i.d.\ sequence $\{ \vep({\mbt}), \, {\mbt} \in \Z^2\} $ 
with deterministic 
coefficients
\begin{equation}\label{acoefL}
a({\mbt}) = \frac{1}{|t_1|^{q_1} +  |t_2|^{q_2}}\Big(L_{{\rm sign}(t_2)}
\big(\frac{t_1}{(|t_1|^{q_1} +|t_2|^{q_2})^{1/q_1}}\big) + o(1)\Big),
\quad |{\mbt}| := |t_1|+|t_2| \to  \infty,
\end{equation}
where $q_i>0$, $i=1,2$, satisfy
\begin{equation}\label{qLRD}
1 < Q := \frac{1}{q_1} + \frac{1}{ q_2} <  2
\end{equation}
and 
$L_\pm$ are continuous functions on $[-1,1]$, $L_+(\pm 1) = L_-(\pm 1) =: L_0 (\pm 1)$. (\cite{pils2017,sur2019b} use a slightly different form of moving-average coefficients $a$ and
assume $L_+ = L_- $ but their results are valid for $a$ in \eqref{acoefL}. See also Sec.~4 below.)
Since $a(t,0) = O(|t|^{-q_1})$, $a(0,t) = O(|t|^{-q_2})$, $t \to \infty$, for $q_1 \ne q_2$  decay at different rate in the horizontal and vertical directions,
the ratio $\gamma^0 := q_1/q_2$
can be regarded as `intrinsic (internal) scale ratio' and the exponent $\gamma >0$ as `external scale ratio',
characterizing the anisotropy of the RF $X$ in \eqref{Xlin} and the scaling procedure in \eqref{scale}--\eqref{SXsum},
respectively.  Indeed, the scaling transition for the above $X$
occurs at the point $\gamma^X_0 = \gamma^0$ where
these ratios coincide \cite{pils2017}. Let us remark that isotropic scaling of linear and nonlinear RFs on $\Z^\nu $ and $\R^\nu$ was discussed in
\cite{dob1979, dobmaj1979, leo1999, lah2016} and other works, while the scaling limits of linear random processes with one-dimensional `time' (case $\nu =1$)
were identified in \cite{dav1970}. We also refer to the monographs \cite{dou2003, ber2013, book2012} on various
probabilistic and statistical aspects of long-range dependence.

A direction in the plane (a line passing through the origin) along which the moving-average coefficients $a$ decay at the smallest rate may be
called the {\it dependence axis} of RF $X$ in \eqref{Xlin}. The rigorous definition of dependence axis is given in Sec.~4.
Due to the form in \eqref{acoefL} the dependence axis agrees with the horizontal axis if $q_1 < q_2$ and with
the vertical axis if $q_1 > q_2$, see Proposition \ref{depaxis}.
Since the scaling in \eqref{scale}--\eqref{SXsum} is parallel to the coordinate axes,
we may say that for RF $X$ in \eqref{Xlin}--\eqref{acoefL}, {\it the scaling is congruous with the dependence axis of $X$}
and the results of \cite{pils2017, sur2019b} (as well as of \cite{ps2015,ps2016})
refer to this rather specific situation. The situation when the dependence axis does not agree with any of 
the two coordinate axes (the case of {\it incongruous scaling}) seems to be  more common  and then one may naturally ask
about the scaling transition and the scaling transition point $\gamma^X_0$  under incongruous scaling.

The present paper discusses the above problem for linear RF in \eqref{Xlin} with the moving-average
coefficients
\begin{equation}\label{bcoefL}
b({\mbt}) = \frac{1}{|{\mbb}_1\cdot {\mbt}|^{q_1} +  |{\mbb}_2\cdot {\mbt}|^{q_2}}\Big(L_{\operatorname{sign}({\mbb}_2\cdot {\mbt})}
\big(\frac{{\mbb}_1\cdot {\mbt}}{(|{\mbb}_1\cdot {\mbt}|^{q_1} +|{\mbb}_2\cdot {\mbt}|^{q_2})^{1/q_1}}\big) + o(1)\Big),
\quad |{\mbt}|  \to  \infty,
\end{equation}
where ${\mbb}\cdot {\mbt} := b_1t_1 + b_2 t_2 $ is the scalar product, $\mbb_i = (b_{i1}, b_{i2})^\top$, $i=1,2$,
are real vectors, $B= (b_{ij})_{i,j=1,2}$
is a nondegenerate matrix and $q_i > 0$, $i=1,2$, $Q  \in (1,2)$,
$L_\pm $ are the same as in \eqref{acoefL}.
The dependence axis of 
$X$ with coefficients $b$ in \eqref{bcoefL} 
 is given by
\begin{equation}
{\mbb}_2\cdot {\mbt} = 0 \ \ (q_1 < q_2) \quad \text{or} \quad {\mbb}_1\cdot {\mbt} = 0 \ \ (q_1 > q_2),
\end{equation}
see Proposition \ref{depaxis} below, and generally does not agree with the coordinate axes, which results
in incongruous scaling in \eqref{scale}. We prove that the last
fact completely changes the scaling transition. Namely, under incongruous scaling
{\it the scaling transition point $\gamma^X_0$ in \eqref{Vlim} is always 1}: $\gamma^X_0 =1 $
for 
any $q_1>0$, $q_2>0 $ satisfying \eqref{qLRD}, and the unbalanced limits $V^X_\pm $ are generally different from
the corresponding limits in the congruous scaling case.
The main results of this paper
are illustrated in Table 1.  Throughout the paper we use the notation
\begin{eqnarray}\label{Hnota}
\widetilde Q_i := Q - \frac{1}{2q_i}, \quad
\tilde H_i := 1 - \frac{q_i}{2} (2-Q), \quad H_i:= \frac{1}{2} + q_i (Q-1), \quad i=1,2,
\end{eqnarray}
and $B_{{\cal H}_1, {\cal H}_2} = \{B_{{\cal H}_1, {\cal H}_2}({\mbx}), \, {\mbx} \in \R^2_+ \} $ for
fractional Brownian sheet (FBS) with Hurst parameters
$0< {\cal H}_i \le 1$, $i=1,2$, defined as a Gaussian RF with zero mean and covariance
$\E B_{{\cal H}_1, {\cal H}_2}({\mbx}) B_{{\cal H}_1, {\cal H}_2}({\mby}) = (1/4)
\prod_{i=1}^2 (x_i^{2{\cal H}_i} + y_i^{2{\cal H}_i} - |x_i-y_i|^{2{\cal H}_i})$, ${\mbx}, {\mby} \in \R^2_+.  $

\begin{table}[htbp]

\begin{center}

\begin{tabular}{| c | c | c | c | c |c| c |}
\hline
{$\begin{array}{c}
{\rm Parameter} \\
{\rm region}
\end{array}$
} &\multicolumn{3}{c |}{Congruous scaling} &\multicolumn{3}{c |}{Incongruous scaling}\\
\cline{2-7}
&{Critical $\gamma^X_0$} &{$V^X_+$} & {$V^X_-$} &{Critical $\gamma^X_0$} &{$V^X_+$} & {$V^X_-$} \\
\hline
{$\widetilde Q_1 \wedge \widetilde Q_2 > 1 $} & $q_1/q_2$ & $B_{1,\tilde H_2}$  & $B_{\tilde H_1,1}$ & $1$
& $B_{1, \tilde H_1 \wedge \tilde H_2}$  & $B_{\tilde H_1 \wedge \tilde H_2,1}$ \\
\hline
$\widetilde Q_1 < 1 < \widetilde Q_2 $ & $q_1/q_2$ & $B_{H_1, 1/2}$   &  $B_{\tilde H_1,1}$ &   $1$
& $B_{H_1\wedge H_2, 1/2}$   &  $B_{1/2, H_1\wedge H_2}$ \\
\hline
$\widetilde Q_2 < 1 < \widetilde Q_1 $ & $q_1/q_2$ & $B_{1, \tilde H_2}$  &  $B_{1/2, H_2}$   &  $1$ & $B_{H_1\wedge H_2, 1/2}$   &  $B_{1/2, H_1\wedge H_2}$    \\
\hline
$\widetilde Q_1 \vee \widetilde Q_2 < 1    $ & $q_1/q_2$ & $B_{H_1, 1/2}$   &  $B_{1/2, H_2}$ &   $1$  & $B_{H_1\wedge H_2, 1/2}$   &  $B_{1/2, H_1\wedge H_2}$  \\
\hline
\end{tabular}
\caption{Unbalanced scaling limits $V^X_\pm$  (without asymptotic constants) 
under congruous and incongruous scaling.
 }\label{tab:1}
\end{center}

\end{table}

We expect that the results of the present paper can be extended to {\it negatively dependent} linear RFs with
coefficients as in \eqref{bcoefL} satisfying $Q < 1 $ (which guarantees their summability) and the zero-sum
condition $\sum_{{\mbt} \in \Z^2} b({\mbt}) = 0$. The existence of the scaling transition for negatively dependent RFs
with coefficients as in \eqref{acoefL} (i.e., under congruous scaling) was established in \cite{sur2019b}. Let us note
that the case of negative dependence is more delicate, due to the possible occurrence  of edge effects, see \cite{lah2016,sur2019b}.
Further interesting  open problems concern incongruous scaling of {\it nonlinear } or {\it subordinated} RFs on $\Z^2 $ (see \cite{pils2017})
and possible extensions to $\Z^3 $ and higher dimensions. We mention that  the scaling diagram of linear LRD RF on $\Z^3 $ under congruous
scaling is quite complicated, see \cite{sur2019a}; the incongruous scaling may lead
to a much more simple result akin to Table 1.

Sec.~2 contains the main results (Theorems  \ref{thmR11}--\ref{thmR1}), together with rigorous assumptions and
the definitions of the limit RFs.  The proofs of these facts are given in Sec.~3. Sec.~4 (Appendix) contains the definition
and the existence of the dependence axis for moving-average coefficients $b$ as in \eqref{bcoefL} (Proposition \ref{depaxis}). We also prove in Sec.~4
that the dependence axis is preserved under convolution, implying that the covariance function of the linear RF $X$
also decays along this axis at
the smallest rate.

\smallskip

\noi {\it Notation.} In what follows, $C$ denote  generic positive constants
which may be different at different locations. We write $\limfdd$, $\eqfdd$, and $ \neqfdd $
for the weak convergence, equality, and inequality
of finite-dimensional
distributions, respectively.
$\boldsymbol{1} := (1,1)^\top$, $\boldsymbol{0} := (0,0)^\top$, $\R^2_0 :=  \R^2 \setminus \{ \boldsymbol{0}\}$, $\R_+^2 := \{  \boldsymbol{x} = (x_1,x_2)^\top \in \R^2: x_i > 0, \,
i=1,2 \}$, $\R_+ := (0,\infty)$ and $(\boldsymbol{0}, \boldsymbol{x}] := (0,x_1]\times (0,x_2]$, $\boldsymbol{x} = (x_1,x_2)^\top \in \R^2_+$. Also, $\lfloor x \rfloor := \max \{ k \in \Z: k \le x \}$,  $\lceil x \rceil := \min \{ k \in \Z: k \ge x \}$, $x \in \R$, and
$\lfloor {\mbx} \rfloor := (\lfloor x_1 \rfloor, \lfloor x_2 \rfloor)^\top$, $\lceil {\mbx} \rceil :=
(\lceil x_1 \rceil, \lceil x_2 \rceil)^\top$, $|\mbx| := |x_1|+|x_2|$, $\mbx = (x_1,x_2)^\top \in \R^2$.
We also write $f(\boldsymbol{x}) = f(x_1,x_2)$, $\boldsymbol{x} = (x_1,x_2)^\top \in \R^2$.





\section{Main results}

For $\gamma >0$, we study the limit distribution in \eqref{scale} of partial sums
\begin{eqnarray}\label{SX}
S_{\la,\gamma}({\mbx}) = \sum_{{\mbt} \in (0, \lambda x_1] \times (0, \lambda^\gamma x_2] \cap \Z^2} X({\mbt}), \quad {\mbx} \in \R^2_+,
\end{eqnarray}
over rectangles of a linear RF
\begin{equation}\label{Xlin2}
X({\mbt}) = \sum_{{\mbs} \in \Z^2} b({\mbt}-{\mbs}) \vep({\mbs}), \quad {\mbt} \in \Z^2,
\end{equation}
satisfying the following assumptions.

\medskip

\noi {\bf Assumption A} \ Innovations $\vep({\mbt})$, ${\mbt} \in \Z^2$, in \eqref{Xlin2}
are i.i.d.\ r.v.s with  $\E \vep (\boldsymbol{0}) =0, \E |\vep(\boldsymbol{0})|^2 =1$.

\medskip

\noi {\bf Assumption B} \ Coefficients $b({\mbt})$, ${\mbt} \in \Z^2$, in \eqref{Xlin2} satisfy
\begin{equation}\label{bcoef}
b(\boldsymbol{t}) = \rho(B \boldsymbol{t})^{-1} (L (B\boldsymbol{t}) + o(1)), \quad |\mbt| \to \infty,
\end{equation}
where $B = (b_{ij})_{i,j=1,2} $ is a real nondegenerate matrix, 
and
\begin{equation} \label{rh}
\rho (\mbu) := |u_1|^{q_1} + |u_2|^{q_2},  \quad \mbu \in \R^2,
\end{equation}
with $q_i>0$, $i=1,2$, satisfying \eqref{qLRD},
and
\begin{equation}\label{angL}
L(\boldsymbol{u}) := L_+ ( u_1/\rho(\boldsymbol{u})^{1/q_1} )\1 ( u_2 \ge 0 ) + L_-( u_1 / \rho(\boldsymbol{u})^{1/q_1} )\1 ( u_2 < 0 ), \quad \boldsymbol{u} \in \R^2_0,
\end{equation}
where $L_\pm (x)$, $x \in [-1,1]$, are continuous functions such that
$L_+ (1) = L_- (1)$, $L_+(-1)= L_- (-1). $ 

\medskip

We note that the boundedness and continuity assumptions of the `angular functions'  $L_\pm$ in \eqref{angL} do not seem necessary for our results and possibly can be relaxed. 
Note $q_1 < q_2$ for $1< Q <2 $  implies $H_1 \wedge H_2 = H_1,  \ \tilde H_1 \wedge \tilde H_2 = \tilde H_2.    $
Then from Proposition \ref{depaxis} we see that $\{ \mbt \in \R^2 : {\mbb}_2 \cdot {\mbt} = 0 \}$ with $\mbb_2 = (b_{21},b_{22})^\top$ is the dependence axis of $X$, which agrees with the coordinate axes 
if and only if $b_{21} = 0$ or $b_{22} = 0$ leading to the two cases, namely  $b_{21} = 0$  (congruous scaling) and
$b_{21} b_{22} \neq 0$ (incongruous scaling).  

The limit Gaussian RFs in our theorems are defined as stochastic integrals w.r.t.\ (real-valued) Gaussian white noise $W = \{W(\d \mbu), \, \mbu \in \R^2\}$
	with zero mean and variance $\E W(\d \mbu)^2 = \d \mbu $ (= the Lebesgue measure on $\R^2$).
Let
\begin{equation} \label{ainfty}
a_\infty({\mbu}) :=  \rho(\boldsymbol{u})^{-1} L (\boldsymbol{u}),  \quad {\mbu} \in \R^2_0,
\end{equation}
and
\begin{equation}\label{def:Vij}
\tilde V_{D} (\boldsymbol{x}) := |\operatorname{det}(B)|^{-\frac{1}{2}} \int_{\R^2} \Big\{ \int_{(\boldsymbol{0},\boldsymbol{x}]} a_\infty(\tilde D \boldsymbol{t}-\boldsymbol{u}) \d \boldsymbol{t}  \Big\} W(\d \boldsymbol{u}), \quad \boldsymbol{x} \in \R_+^2,
\end{equation}
where $\tilde D$ is any $2\times 2$ matrix  in \eqref{Bmat} below:
\begin{eqnarray} \label{Bmat}
&\tilde B_{00} := \begin{bmatrix}
b_{11}&0\\
0&b_{22}
\end{bmatrix}, \quad
\tilde B_{01} := \begin{bmatrix}
b_{11}&0\\
b_{21}&0
\end{bmatrix}, \quad
\tilde B_{02} := \begin{bmatrix}
0&b_{12}\\
0&b_{22}
\end{bmatrix}, \quad
\tilde B_{20} := \begin{bmatrix}
0&0\\
b_{21}&b_{22}
\end{bmatrix},\\
&\tilde B_{11} := \begin{bmatrix}
b_{11}&0\\
0&0
\end{bmatrix}, \quad
\tilde B_{21} := \begin{bmatrix}
0&0\\
b_{21}&0
\end{bmatrix}, \quad
\tilde B_{22} := \begin{bmatrix}
0&0\\
0&b_{22}
\end{bmatrix}.\nn
\end{eqnarray}
Also let
\begin{equation}\label{def:tildeVij}
V_{D}(\boldsymbol{x}) := |\operatorname{det}(B)|^{-1} \int_{\R^2} \Big\{ \int_{\R^2} a_\infty (\boldsymbol{t}) \1 (D \boldsymbol{t} + \boldsymbol{u} \in (\boldsymbol{0},\boldsymbol{x}] ) \d \boldsymbol{t}  \Big\} W (\d \boldsymbol{u}), \quad \boldsymbol{x} \in \R^2_+,
\end{equation}
where $D$ is any $2\times 2$ matrix  in \eqref{tildeBmat} below:
\begin{eqnarray} \label{tildeBmat}
B_{01} := \frac{1}{\operatorname{det}(B)} \left[
\begin{array}{rr}
b_{22}&0\\
-b_{21}&0
\end{array} \right], \quad
B_{10} := \frac{1}{\operatorname{det}(B)}
\begin{bmatrix}
b_{22}&-b_{12}\\
0&0
\end{bmatrix}, \quad B_{20} := \frac{1}{\operatorname{det}(B)}
\begin{bmatrix}
0&0\\
-b_{21}&b_{11}
\end{bmatrix},\\
B_{11} := \frac{1}{\operatorname{det}(B)} \left[
\begin{array}{rr}
b_{22}&0\\
0&0
\end{array} \right],\quad
B_{21} := \frac{1}{\operatorname{det}(B)} \left[
\begin{array}{rr}
0&0\\
-b_{21}&0
\end{array} \right], \quad
B_{22} := \frac{1}{\operatorname{det}(B)} \left[
\begin{array}{rr}
0&0\\
0&b_{11}
\end{array} \right]. \nn
\end{eqnarray}
Recall that in \eqref{Bmat}, \eqref{tildeBmat} $b_{ij}$ are entries of the matrix $B $ in \eqref{bcoef}.
To shorten notation, write $\tilde V_{ij} := \tilde V_{\tilde B_{ij}}$, $V_{ij} := V_{B_{ij}} $ and also define $\tilde V_0 :=  \tilde V_{B}$, $V_0 := V_{B^{-1}}$ satisfying $\tilde V_0 \eqfdd V_0$.
The existence of all these RFs in the corresponding regions of parameters $q_1, q_2$ is established
in Proposition \ref{Vexist}, which also identifies some of these RFs 
with FBS having one of its parameters
equal to $1$ or $\frac{1}{2}$. Recall that stochastic integral $\int_{\R^2} h({\mbu}) W(\d {\mbu}) =:  I(h)$ w.r.t.\
Gaussian white noise $ W$ is well-defined
for any $h \in L^2(\R^2)$ and has a Gaussian distribution with zero mean and variance
$\E |I(h)|^2 = \|h\|^2 = \int_{\R^2} |h({\mbu})|^2 \d {\mbu}$. Recall the definitions of $\tilde Q_i$, $\tilde H_i$, $H_i$, $i=1,2$, in
\eqref{Hnota}. In Proposition \ref{Vexist} and Theorems \ref{thmR11}--\ref{thmR1} below, Assumptions A and B
hold without further notice.  Let
$\tilde \sigma^2_{ij} := \E |\tilde V_{ij}({\bf 1})|^2$, $\sigma^2_{ij} := \E |V_{ij}({\bf 1})|^2$.

\begin{prop} \label{Vexist} 
	
	\medskip
	
	\noi (i) Let $q_1 < q_2 $ and $  \tilde Q_1 > 1 $. Then RFs $\tilde V_{00}$, $\tilde V_{11}$, $\tilde V_{20}$, $\tilde V_{21}$, $\tilde V_{22}$ are well-defined and
	\begin{equation} \label{R11}
	\tilde V_{22} \eqfdd \tilde \sigma_{22} B_{1, \tilde H_2}, \quad \tilde  V_{21} \eqfdd \tilde \sigma_{21} B_{\tilde H_2,1}, \quad
	\tilde V_{11} \eqfdd \tilde \sigma_{11} B_{\tilde H_1,1}.
	\end{equation}
	
	\medskip
	
	\noi (ii) Let $q_1 < q_2$ and $  \tilde Q_1 < 1 < \tilde Q_2 $. Then RFs $\tilde V_{00}$, $\tilde V_{11}$,
	$V_{01}$, $V_{11}$, $V_{21}$ are well-defined and
	\begin{equation} \label{R12}
	V_{11} \eqfdd \sigma_{11} B_{H_1, \frac{1}{2}}, \quad V_{21} \eqfdd \sigma_{21} B_{\frac{1}{2}, H_1}.
	\end{equation}
	
	\medskip
	
	\noi (iii) Let $q_1 < q_2 $ and $ \tilde Q_2 < 1 $. Then RFs $\tilde V_{00}$,
	$V_{01}$, $V_{11}$, $V_{21}$, $V_{22}$ are well-defined and
	\begin{equation} \label{R22}
	V_{22} \eqfdd \sigma_{22} B_{\frac{1}{2}, H_2}.
	\end{equation}
	
	\medskip
	
	\noi (iv) Let $q_1 = q_2 =: q \in (1, \frac{3}{2})$. Then $\tilde V_{02}$, $\tilde V_0 $ and $\tilde V_{01} $ are well defined and
	\begin{equation} \label{R02}
	\tilde V_{01} \eqfdd \tilde \sigma_{01} B_{\tilde H,1}, \quad \tilde V_{02} \eqfdd \tilde \sigma_{02} B_{1, \tilde H}, \quad \tilde H := 2-q \in \big( \frac{1}{2}, 1 \big).
	\end{equation}
	
	\medskip
	
	\noi (v) Let $q_1 = q_2 =: q \in (\frac{3}{2}, 2) $. Then $V_{10}$, $V_0 $ and $V_{20} $ are well defined and
	\begin{equation} \label{R10}
	V_{10} \eqfdd \sigma_{10} B_{H, \frac{1}{2}}, \quad
	V_{20} \eqfdd \sigma_{20} B_{\frac{1}{2},H}, \quad H := \frac{5}{2}-q \in \big( \frac{1}{2},1 \big).
	\end{equation}
\end{prop}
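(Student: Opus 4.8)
The plan is to prove Proposition \ref{Vexist} in two logically separate parts: (a) establish that each stochastic integral $\tilde V_{ij}$ or $V_{ij}$ is well-defined, i.e.\ that the deterministic kernel appearing in \eqref{def:Vij} or \eqref{def:tildeVij} belongs to $L^2(\R^2)$ in the stated parameter region; and (b) identify the well-defined objects with the appropriate rescaled fractional Brownian sheet by computing covariances. Since the limit fields are Gaussian with zero mean, identifying a field with $\sigma\, B_{\mathcal H_1,\mathcal H_2}$ amounts entirely to checking that its covariance function equals $\sigma^2$ times the FBS covariance $\frac14\prod_{i=1}^2(x_i^{2\mathcal H_i}+y_i^{2\mathcal H_i}-|x_i-y_i|^{2\mathcal H_i})$; by the isometry $\E I(h)I(g)=\langle h,g\rangle_{L^2}$ this reduces to an inner-product computation of the two kernels.

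For the existence part, I would first normalize away the matrix $B$. Note that $a_\infty(\mbu)=\rho(\mbu)^{-1}L(\mbu)$ is homogeneous: $\rho(s^{1/q_1}u_1,s^{1/q_2}u_2)=s\,\rho(\mbu)$ and $L$ is scale-invariant along these anisotropic dilations, so $a_\infty$ obeys a precise scaling law under the group $\mbu\mapsto(s^{1/q_1}u_1,s^{1/q_2}u_2)$. The kernels in \eqref{def:Vij}–\eqref{def:tildeVij} are, after changing variables, averages (over the rectangle $(\boldsymbol 0,\mbx]$) of translates of $a_\infty$ composed with the various degenerate matrices $\tilde B_{ij}$ or $D$. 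Because these matrices have rank $0$, $1$, or $2$, the effective kernel collapses to a function of one or two variables, and the $L^2$-integrability question becomes a homogeneity/power-counting condition near $\mbu=\boldsymbol 0$ and near $\infty$. The quantities $\tilde Q_i=Q-\frac1{2q_i}$ are exactly the exponents governing integrability of $a_\infty$ along the $i$-th axis (the singularity of $a_\infty$ integrated against an indicator over a line or strip), which is why the hypotheses split on whether $\tilde Q_i\gtrless 1$. I would carry out the integrability check region by region, using polar-type anisotropic coordinates adapted to $\rho$ so the homogeneity makes the convergence/divergence of the relevant integrals transparent.

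For the identification part \eqref{R11}–\eqref{R10}, after reducing each $\tilde V_{ij}$ or $V_{ij}$ to an integral in the one or two surviving variables, the inner integral over the rectangle in the direction of a vanishing matrix column becomes linear in the corresponding coordinate $x_i$ (a column of zeros leaves that variable free, producing a factor that is exactly the white-noise integral responsible for a Hurst index equal to $1$), while the direction carrying a genuine anisotropic power law produces a self-similar Gaussian process of index $\tilde H_i$ or $H_i$ in the other coordinate. Concretely, I would verify that the kernel factorizes (or that its $L^2$ inner product factorizes) as a product over the two coordinates, one factor giving the covariance of Brownian sheet in one variable (Hurst $1$ or $\frac12$) and the other giving fractional Brownian motion of the claimed index. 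The value $\frac12$ arises whenever $\tilde Q_i<1$ forces a switch from the ``smooth'' kernel \eqref{def:Vij} to the ``indicator'' kernel \eqref{def:tildeVij}, under which the transverse direction is integrated at a rate producing ordinary (non-fractional) increments. Matching exponents to the definitions of $\tilde H_i,H_i$ in \eqref{Hnota} and reading off the variance as $\tilde\sigma_{ij}^2=\E|\tilde V_{ij}(\boldsymbol 1)|^2$, $\sigma_{ij}^2=\E|V_{ij}(\boldsymbol 1)|^2$ completes each identification; the equality $\tilde V_0\eqfdd V_0$ follows from the change of variables $\mbu\mapsto B^{-1}\mbu$ which sends the kernel in \eqref{def:Vij} with $\tilde D=B$ to that in \eqref{def:tildeVij} with $D=B^{-1}$, the Jacobian accounting for the discrepancy between the $|\det B|^{-1/2}$ and $|\det B|^{-1}$ prefactors.

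The main obstacle I anticipate is the bookkeeping in case (ii) (and symmetrically the borderline cases), where $\tilde Q_1<1<\tilde Q_2$ and some fields are naturally expressed via \eqref{def:Vij} while others require the \eqref{def:tildeVij} representation: one must show that the same limit object admits both representations and that the degenerate matrices $\tilde B_{ij}$ and $B_{ij}$ are the correct ones making the integrals converge. This requires care because a matrix column of zeros in $\tilde B_{ij}$ corresponds to collapsing one spatial direction, and checking that the surviving one-dimensional kernel is genuinely square-integrable near the origin (where $a_\infty$ blows up like $\rho^{-1}$) demands the precise threshold $\tilde Q_i=1$. I would handle this by treating the integrability exponent as a function of $(q_1,q_2,Q)$ and verifying that the stated inequalities on $\tilde Q_i$ are exactly the sharp conditions, so that the five listed fields in each of (i)–(v) are precisely those that survive.
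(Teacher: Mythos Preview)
Your two-part plan (square-integrability of the kernel, then covariance identification) is correct and matches the paper's structure, but the paper executes both steps with sharper tools than the ones you describe.

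For existence, rather than anisotropic polar coordinates and power-counting, the paper bounds $|a_\infty|\le C\rho^{-1}$ and then invokes the convolution identity
\[
(\rho^{-1}\star\rho^{-1})(\mbu)=\tilde\rho(\mbu)^{-1}\,\tilde L\big(|u_1|/\tilde\rho(\mbu)^{1/\tilde q_1}\big),\qquad \tilde\rho(\mbu)=|u_1|^{\tilde q_1}+|u_2|^{\tilde q_2},\ \tilde q_i=q_i(2-Q),
\]
so that each $\|\tilde h_{ij}\|^2$ is dominated by $\int_{[-1,1]^2}\tilde\rho(\tilde B_{ij}\mbt)^{-1}\d\mbt$. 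The threshold conditions then become the one-line checks $\tilde q_i<1\Leftrightarrow\tilde Q_{3-i}>1$, and for the $V_{ij}$ kernels one integrates out the $t_2$ (or $t_1$) variable against $\rho^{-1}$ first and is left with a one-dimensional convolution whose integrability is $\frac12<q_i(1-\tfrac1{q_{3-i}})<1$, i.e.\ exactly $\tilde Q_{3-i}<1<Q$. Your route would reach the same inequalities but with more work.

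For the FBS identification, the paper does not compute full covariances: it only establishes the variance formula $\E|\tilde V_{ij}(\mbx)|^2=\tilde\sigma_{ij}^2\,x_1^{2\mathcal H_1}x_2^{2\mathcal H_2}$ by the scaling $\lambda\,a_\infty(\lambda^{1/q_1}t_1,\lambda^{1/q_2}t_2)=a_\infty(\mbt)$ together with an appropriate change of variables, and then uses (implicitly) that the fields have stationary rectangular increments --- the kernel at $\mbx$ is an integral over $(\boldsymbol 0,\mbx]$ of a translate of $a_\infty$, so the increment over any rectangle depends only on its side lengths. For a zero-mean Gaussian field with stationary rectangular increments, the variance at $\mbx$ determines the covariance and hence the FBS law. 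Your plan to verify the full covariance is correct but redundant. Note also that the kernels themselves do \emph{not} factor as tensor products (since $a_\infty$ couples the two coordinates through $\rho$); only the resulting covariance factors.

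One small confusion in your last paragraph: in case~(ii) the fields $\tilde V_{11}$ and $V_{11}$ are genuinely different random fields, not two representations of a single object. There is nothing to reconcile between \eqref{def:Vij} and \eqref{def:tildeVij}; one simply checks well-definedness of each separately in the stated region.
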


\noi As noted above, our main results  (Theorems \ref{thmR11}--\ref{thmR1})
describe the anisotropic scaling limits and the scaling transition of the linear RF $X$ in \eqref{Xlin2}, viz.,
\begin{equation}\label{scale1}
\big\{A^{-1}_{\la, \gamma} S^X_{\la, {\gamma}}({\mbx}), \, \mbx \in \R^2_+ \big\}  \limfdd V^X_{\gamma} =
\begin{cases} V^X_+,  &\gamma > \gamma^X_0, \\
V^X_-, &\gamma < \gamma^X_0, \\
V^X_0, &\gamma = \gamma^X_0,
\end{cases}
\quad \lambda \to \infty,
\end{equation}
where $S^X_{\la, {\gamma}}$ is the partial-sum RF in \eqref{SX}.


\begin{thm} \label{thmR11} Let $q_1 < q_2 $ and  $\tilde Q_{1} > 1 $. Then the convergence in \eqref{scale1} holds for all $\gamma >0$ in
	
	\medskip
	
	\noi (i) Case $b_{21} b_{22} \ne 0$ (incongruous scaling) with
	$\ga^X_0 = 1$, $V^X_+ = \tilde V_{22}$, $V^X_- = \tilde V_{21}$, $V^X_0 = \tilde V_{20}$.
	
	\medskip
	
	\noi  (ii) Case $b_{21} = 0 $ (congruous scaling) with
	$ \ga^X_0 = \frac{q_1}{q_2}$, $V^X_+ =  \tilde V_{22}$, $V^X_- = \tilde V_{11}$, $V^X_0 = \tilde V_{00}$.
\end{thm}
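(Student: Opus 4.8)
The plan is to exploit the linearity of $X$ in the i.i.d.\ innovations and reduce the finite-dimensional convergence in \eqref{scale1} to the convergence of covariances of a triangular array together with a Lindeberg condition. Writing the partial-sum field as a linear functional of the noise,
\begin{equation*}
S_{\la,\ga}(\mbx) = \sum_{\mbs \in \Z^2} g_{\la,\ga}(\mbx,\mbs)\, \vep(\mbs), \qquad g_{\la,\ga}(\mbx,\mbs) := \sum_{\mbt \in K_{\la,\ga}(\mbx)} b(\mbt-\mbs),
\end{equation*}
any finite linear combination $\sum_k \theta_k S_{\la,\ga}(\mbx_k)$ is a sum of independent, mean-zero summands $(\sum_k\theta_k g_{\la,\ga}(\mbx_k,\mbs))\vep(\mbs)$. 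First I would establish that $A_{\la,\ga}^{-2}\sum_{\mbs}g_{\la,\ga}(\mbx,\mbs)g_{\la,\ga}(\mby,\mbs)$ converges to $\E[\tilde V_D(\mbx)\tilde V_D(\mby)]$ for the appropriate limit matrix $\tilde D$. Gaussianity of the limit then follows from the Lindeberg CLT for independent arrays, whose hypothesis reduces to the asymptotic negligibility $\max_{\mbs}|g_{\la,\ga}(\mbx,\mbs)|/A_{\la,\ga}\to 0$, a standard consequence of the non-atomic (integral) form of the limit covariance.

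The core of the argument is therefore the covariance asymptotics, and here the essential device is an anisotropic rescaling adapted to the homogeneity of $a_\infty$. Using Assumption B, I would first replace $b(\mbt-\mbs)$ by its leading term $a_\infty(B(\mbt-\mbs))$ from \eqref{ainfty} and show that the $o(1)$ remainder contributes negligibly in $L^2$; then approximate the inner sum over $\mbt$ by the integral over the rescaled rectangle, substituting $t_1 = \la t_1'$, $t_2 = \la^\ga t_2'$. The function $a_\infty$ satisfies the scaling relation $a_\infty(\delta_r\mbu)=r^{-1}a_\infty(\mbu)$ under the dilation $\delta_r\mbu := (r^{1/q_1}u_1, r^{1/q_2}u_2)$, and the whole analysis rests on choosing the dilation scale $r$ so that the dominant edge of the stretched rectangle is normalized to unit order. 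Changing the noise variable to $\mbu = \delta_r^{-1}B\mbs$ produces the Jacobian factor $|\det(B)|^{-1}$ (whence the constant in \eqref{def:Vij}) and casts the limit in the white-noise form $\tilde V_D$.

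The decisive step is the dominant-balance analysis of the rescaled argument $\delta_r^{-1}\,B\,\mathrm{diag}(\la,\la^\ga)\,\mbt'$, which determines the surviving matrix $\tilde D$. Since $q_1<q_2$, the slowest decay of $a_\infty$ is in the $u_1$-direction, so the dominant scale is governed by the second coordinate $u_2 = \mbb_2\cdot\mbt$. In the incongruous case $b_{21}b_{22}\ne 0$ the extent of this coordinate is $\la^{\ga\vee 1}$, so taking $r=\la^{q_2(\ga\vee 1)}$ one checks that the first dilated coordinate always tends to $0$ (because $q_2/q_1>1$ makes its exponent $(\ga\vee1)(1-q_2/q_1)$ negative), while the second retains precisely the non-negligible among $b_{21}\la t_1'$ and $b_{22}\la^\ga t_2'$; the balance thus occurs at $\ga=1$ and yields $\tilde B_{22}$ for $\ga>1$, $\tilde B_{21}$ for $\ga<1$, and $\tilde B_{20}$ at $\ga=1$. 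In the congruous case $b_{21}=0$ the second coordinate reduces to $b_{22}\la^\ga t_2'$, so the dominant anisotropic scale itself switches between the two edges and the relevant comparison becomes $\ga$ versus $q_1/q_2$, producing $\tilde B_{22}$, $\tilde B_{11}$, and $\tilde B_{00}$ across the transition at $\ga=q_1/q_2$. Tracking the powers of $\la$ accumulated from $\d\mbt'$, from $a_\infty(\delta_r\cdot)=r^{-1}a_\infty(\cdot)$, and from the Jacobian $r^{Q}$ fixes the normalization $A_{\la,\ga}$ (a power of $\la$ expressible through $\tilde H_1,\tilde H_2$ in \eqref{Hnota}) and identifies the limit covariance with that of $\tilde V_D$, which is well defined and of the stated FBS form by Proposition \ref{Vexist}(i).

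I expect the main obstacle to be the uniform control needed to justify two interchanges at once: replacing the discrete sums over $\mbt$ and $\mbs$ by integrals, and discarding the asymptotically subdominant entries of $B\,\mathrm{diag}(\la,\la^\ga)$, both uniformly in the noise variable $\mbu$. Near the origin $a_\infty$ is singular and at infinity it decays, so passing to the limit inside the $\mbu$-integral requires an integrable dominating function; this is exactly where the standing hypotheses $q_1<q_2$ and $\tilde Q_1>1$ enter, guaranteeing through Proposition \ref{Vexist} that the limiting kernel $\int_{(\boldsymbol{0},\mbx]}a_\infty(\tilde D\mbt-\mbu)\,\d\mbt$ lies in $L^2(\R^2)$. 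Handling the region where $B\mbs$ is close to the stretched rectangle, so that the argument of $a_\infty$ is near its singularity, and showing that the discrete-to-continuous error there is negligible is the most delicate point; I would treat it by splitting the $\mbs$-sum into a bounded neighbourhood of the rectangle and its complement and bounding each piece with the homogeneity estimates for $a_\infty$.
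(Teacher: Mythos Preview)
Your plan is correct and, at the conceptual level, matches the paper: both reduce \eqref{scale1} to $L^2$-convergence of the (suitably rescaled) kernels $g_{\la,\ga}(\mbx,\cdot)$ and then invoke a CLT, and your dominant-balance analysis identifying $\tilde B_{22},\tilde B_{21},\tilde B_{20}$ in case~(i) and $\tilde B_{22},\tilde B_{11},\tilde B_{00}$ in case~(ii) is exactly what the paper does. The differences are in execution. First, rather than a bare Lindeberg argument, the paper packages the CLT step as a general criterion (Proposition~\ref{prop}): once one shows $\tilde g_\la(\mbu):=|\det(A\Lambda)|^{1/2}g_\la(\lceil A\Lambda\mbu\rceil)\to h$ in $L^2(\R^2)$ for suitable $A,\Lambda$, the limit $I(h)$ follows directly; this absorbs both the Lindeberg negligibility and the discrete-to-continuous passage you flag as delicate. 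Second, for the $L^2$-convergence the paper does \emph{not} seek a fixed integrable majorant (which is awkward because the off-diagonal entries of $\Lambda^{-1}B\Lambda'$ are small but nonzero). Instead it uses Pratt's lemma with a $\la$-dependent majorant $\tilde G_\la(\mbu)=\int_{(\boldsymbol{0},\mbx]}\rho(\Lambda^{-1}B\Lambda'\mbt-\mbu)^{-1}\d\mbt$, and verifies $\|\tilde G_\la\|^2\to\|\tilde G\|^2$ by writing $\|\tilde G_\la\|^2$ via the convolution $(\rho^{-1}\star\rho^{-1})$ of \eqref{def:conv} and a shear change of variables that removes the vanishing off-diagonal terms. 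This replaces your proposed neighbourhood splitting by a single dominated-convergence step and is what makes the incongruous case go through cleanly; your route would work but is heavier.
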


\begin{thm} \label{thmR12} Let $q_1 < q_2 $ and $\tilde Q_{1} < 1  <  \tilde Q_{2}$. Then the convergence in \eqref{scale1}
	holds for all $\gamma >0 $ in
	
	\medskip
	
	\noi (i) Case $b_{21} b_{22} \ne 0 $ (incongruous scaling) with
	$\ga^X_0 = 1$, $V^X_+ = V_{11}$, $V^X_- =  V_{21}$, $V^X_0 = V_{01}$.
	
	\medskip
	
	\noi (ii) Case $b_{21} = 0 $ (congruous scaling) with
	$\ga^X_0 = \frac{q_1}{q_2}$, $V^X_+ = V_{11}$, $V^X_- = \tilde V_{11}$, $V^X_0 = \tilde V_{00}$.
	
\end{thm}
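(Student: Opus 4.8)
The plan is to use the linearity of $X$ to rewrite each partial sum as a weighted sum of the i.i.d.\ innovations and then reduce the whole assertion to an $L^2$-convergence of deterministic kernels. Set $g_{\la,\ga}(\mbx,\mbs) := \sum_{\mbt \in (0,\la x_1]\times (0,\la^\ga x_2]\cap \Z^2} b(\mbt-\mbs)$, so that $S^X_{\la,\ga}(\mbx) = \sum_{\mbs \in \Z^2} g_{\la,\ga}(\mbx,\mbs)\,\vep(\mbs)$ and $A^{-1}_{\la,\ga} S^X_{\la,\ga}(\mbx)$ is a linear form in $\{\vep(\mbs)\}$. By the Cram\'er--Wold device, the $\limfdd$ statement \eqref{scale1} reduces to the one-dimensional convergence of $\sum_j \theta_j A^{-1}_{\la,\ga} S^X_{\la,\ga}(\mbx^{(j)}) = \sum_{\mbs} c_{\la,\ga}(\mbs)\,\vep(\mbs)$, where $c_{\la,\ga}(\mbs) := A^{-1}_{\la,\ga}\sum_j \theta_j g_{\la,\ga}(\mbx^{(j)},\mbs)$. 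For a weighted sum of centered, unit-variance i.i.d.\ variables the Lindeberg--Feller CLT yields a Gaussian limit once $\sum_{\mbs} c_{\la,\ga}(\mbs)^2$ converges to the target variance and $\max_{\mbs}|c_{\la,\ga}(\mbs)| \to 0$; the latter is automatic from $\E|\vep(\boldsymbol{0})|^2 < \infty$ together with the former $L^2$ bound. Hence it suffices to (a) choose $A_{\la,\ga}$ and (b) prove that the rescaled kernels $c_{\la,\ga}$ converge in $L^2$ to the kernel of $\sum_j \theta_j V^X_\ga(\mbx^{(j)})$.

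First I would pin down the normalization so that $A^2_{\la,\ga}$ matches the growth of $\operatorname{Var}\big(S^X_{\la,\ga}(\mbx)\big) = \sum_{\mbs} g_{\la,\ga}(\mbx,\mbs)^2 = \sum_{\mbt,\mbt'} r(\mbt-\mbt')$, where $r(\mbt-\mbt') = \operatorname{Cov}(X(\mbt),X(\mbt'))$ and the last double sum runs over the rectangle. Inserting $b(\mbt) = \rho(B\mbt)^{-1}(L(B\mbt)+o(1))$ from \eqref{bcoef}, approximating sums by integrals, and exploiting the anisotropic homogeneity $a_\infty(\operatorname{diag}(\kappa^{1/q_1},\kappa^{1/q_2})\mbu) = \kappa^{-1} a_\infty(\mbu)$ of $a_\infty$ in \eqref{ainfty}, this variance is governed by the aspect ratio $\la^{\ga-1}$. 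In the incongruous case it produces two competing rates $\la^{2H_1+\ga}$ and $\la^{1+2\ga H_1}$, which cross exactly at $\ga=1$ because $2H_1-1 = 2q_1(Q-1)\ne 0$; in the congruous case $b_{21}=0$ the competing rates are $\la^{2H_1+\ga}$ and $\la^{2\tilde H_1+2\ga}$, crossing at $\ga = q_1Q-1 = q_1/q_2$. This simultaneously produces $\ga^X_0$ and signals which limit to expect: by \eqref{Hnota} the sign of $\tilde Q_i-1$ decides whether the inner summation in a given direction converges (a macroscopic $\tilde V$-kernel of the form \eqref{def:Vij}, with FBS exponent $\tilde H_i$ or $1$) or diverges (a microscopic, Brownian-in-that-direction $V$-kernel of the form \eqref{def:tildeVij}, with exponent $\tfrac12$); the hypothesis $\tilde Q_1 < 1 < \tilde Q_2$ fixes the pattern recorded in \eqref{R12}.

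The core of the argument is the $L^2$-convergence in (b), carried out separately for $\ga>\ga^X_0$, $\ga<\ga^X_0$ and $\ga=\ga^X_0$. In each regime I would introduce the matching rescaling of the innovation index $\mbs$ (identifying the lattice with the grid on which $W$ lives, the prefactor $|\det B|^{-1/2}$ or $|\det B|^{-1}$ emerging as the Jacobian of the change of variables), rescale the inner variable $\mbt$ inside $g_{\la,\ga}$, and pass to the limit by dominated convergence using $|a_\infty(\mbu)| \le C\rho(\mbu)^{-1}$ as an $\la$-uniform envelope. The three distinct limits arise because, under the aspect ratio $\la^{\ga-1}$, the quantity $\rho(B(\mbt-\mbs)) = |\mbb_1\cdot(\mbt-\mbs)|^{q_1}+|\mbb_2\cdot(\mbt-\mbs)|^{q_2}$ is dominated by one of its two terms, so the complementary direction collapses; this degeneration is precisely encoded by the rank-deficient matrices $B_{11},B_{21},B_{01}$ of \eqref{tildeBmat} (and $\tilde B_{11},\tilde B_{00}$ of \eqref{Bmat}) defining the limits $V_{11},V_{21},V_{01}$ (resp.\ $\tilde V_{11},\tilde V_{00}$) named in the statement. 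Existence of these limit fields and their identification with the FBS in \eqref{R12} is supplied by Proposition \ref{Vexist}, so no separate existence check is needed.

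I expect the kernel analysis in the incongruous case to be the main obstacle. Since $b_{21}b_{22}\ne 0$, the dependence axis $\{\mbb_2\cdot\mbt=0\}$ is oblique to the coordinate directions along which the rectangle grows, so $a_\infty(B(\mbt-\mbs))$ does not factorize across the two coordinate sums and the competition between $|\mbb_1\cdot(\mbt-\mbs)|^{q_1}$ and $|\mbb_2\cdot(\mbt-\mbs)|^{q_2}$ must be tracked uniformly in $\la$. The delicate points are: securing a single $\la$-uniform $L^2$-dominating function over this oblique geometry; showing that the non-dominant direction contributes only the collapsing (rank-deficient) limit rather than a spurious residual term; and verifying that the $o(1)$ remainder in \eqref{bcoef} stays negligible after summation over the full rectangle. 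The congruous case (ii) is then the degenerate specialization $b_{21}=0$, where $B$ is upper-triangular, the two coordinate sums decouple, and one recovers the limits $V_{11},\tilde V_{11},\tilde V_{00}$ and the transition $\ga^X_0=q_1/q_2$ of \cite{pils2017}.
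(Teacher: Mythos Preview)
Your overall plan matches the paper's: write $S^X_{\la,\ga}$ as a linear form in the innovations, reduce to one-dimensional convergence, and prove $L^2$-convergence of rescaled kernels to the integrand of the target stochastic integral. The paper packages the CLT step as its Proposition~\ref{prop}, which handles both variance convergence and asymptotic negligibility by approximation with simple functions; your claim that $\max_{\mbs}|c_{\la,\ga}(\mbs)|\to 0$ is ``automatic'' from the $L^2$ bound alone is not correct in general, and Proposition~\ref{prop} is the clean way around this.

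The real gap is in the $L^2$-convergence. You propose dominated convergence with the single envelope $\rho^{-1}$, but in the regime $\tilde Q_1<1$ this does not produce a $\la$-uniform $L^2$ majorant. After the change $\mbt\mapsto B^{-1}\Lambda'\mbt$ with $\Lambda'=\operatorname{diag}(\la,\la^{q_1/q_2})$, the second-coordinate constraint in the indicator involves the term $\la^{q_1/q_2-\ga}\frac{\det(B)}{b_{22}}t_2$; for $\ga>q_1/q_2$ this coefficient vanishes, so for large $|t_2|$ the indicator no longer confines $u_2$ uniformly in $\la$, and any $\la$-independent bound of the form $C\int_{\R^2}\rho(\mbt)^{-1}\1(|b_{22}t_1+u_1|\le C_1)\,\d\mbt$ has infinite $L^2(\d\mbu)$-norm once the $u_2$-restriction is dropped. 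The paper's resolution is precisely a truncation you flag but do not carry out: split $\tilde g_\la=\tilde g_{\la,1}+\tilde g_{\la,0}$ according to $|t_2|<\la^{1-q_1/q_2}$ or $|t_2|\ge\la^{1-q_1/q_2}$. On the truncated piece $\tilde g_{\la,1}$ the indicator \emph{does} bound $|u_2|\le C_2$ uniformly, the envelope is $L^2$ (here is where $\tilde Q_1<1$ is used), and dominated convergence applies. The tail $\tilde g_{\la,0}$ is shown to satisfy $\|\tilde g_{\la,0}\|\to 0$ directly via Minkowski's inequality, using $q_2(1-\tfrac{1}{2q_1})>1$. Without this split the argument does not close. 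The congruous case (ii) is not a direct specialization either: the paper assembles it from pieces proved under Theorems~\ref{thmR11} and~\ref{thmR22} (the limits $\tilde V_{11}$, $\tilde V_{00}$ require the Pratt-lemma argument with the convolution $\rho^{-1}\star\rho^{-1}$ rather than the truncation).
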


\begin{thm} \label{thmR22} Let  $q_1 < q_2 $ and $\tilde Q_{2}< 1$. Then the convergence in \eqref{scale1}
	holds for all $\gamma >0 $ in
	
	\medskip
	
	\noi (i) Case $b_{21} b_{22} \ne 0 $ (incongruous scaling) with $\ga^X_0$, $V^X_+$, $V^X_-$, $V^X_0 $ the same as in Theorem \ref{thmR12} (i).
	
	\medskip
	
	\noi (ii) Case $b_{21} = 0 $ (congruous scaling) with
	$\ga^X_0 = \frac{q_1}{q_2}$, $V^X_+ =  V_{11}$, $V^X_- =  V_{22}$, $V^X_0 = \tilde V_{00}$.

\end{thm}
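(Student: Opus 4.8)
The plan is to use the linear structure of $X$ to reduce the finite-dimensional convergence in \eqref{scale1} to an $L^2$-convergence of deterministic kernels, and then to invoke the central limit theorem for linear forms in i.i.d.\ variables. The partial sum \eqref{SX} is the linear form
\begin{equation*}
S_{\la,\ga}(\mbx) = \sum_{\mbs\in\Z^2} g_{\la,\ga}(\mbx;\mbs)\,\vep(\mbs), \qquad g_{\la,\ga}(\mbx;\mbs) := \sum_{\substack{\mbt\in\Z^2:\ 0<t_1\le \la x_1,\\ 0<t_2\le \la^\ga x_2}} b(\mbt-\mbs),
\end{equation*}
so that, by Assumption A, $\E S_{\la,\ga}(\mbx)^2 = \sum_{\mbs\in\Z^2} g_{\la,\ga}(\mbx;\mbs)^2$. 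By the standard CLT for such linear forms, the convergence $A_{\la,\ga}^{-1}S_{\la,\ga}\limfdd V^X_\ga$ follows once I establish, for every fixed $\mbx$ (and, via the Cram\'er--Wold device and the corresponding bilinear forms, jointly for finitely many $\mbx$'s), the $L^2(\R^2)$-convergence of the suitably embedded and rescaled kernel $A_{\la,\ga}^{-1}g_{\la,\ga}(\mbx;\cdot)$ to the kernel of the white-noise integral defining $V^X_\ga$, together with the negligibility condition $A_{\la,\ga}^{-1}\max_{\mbs}|g_{\la,\ga}(\mbx;\mbs)|\to 0$ which secures the Lindeberg condition using only $\E|\vep(\boldsymbol 0)|^2=1$.

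The core is the kernel asymptotics. Using Assumption B I would first replace $b(\mbt-\mbs)$ by $a_\infty(B(\mbt-\mbs))$ with $a_\infty$ as in \eqref{ainfty}, controlling the $o(1)$-error in $L^2$, and then replace the sum over $\mbt$ by the integral $\int_{(\boldsymbol 0,\mbx]}$ after the anisotropic rescaling $\mbt\mapsto(\la t_1,\la^\ga t_2)^\top$ and the change of variables $\mbu:=B\mbs$ (Jacobian $|\det B|$). Depending on whether $\ga>1$, $\ga=1$, or $\ga<1$, the anisotropic rescaling degenerates $B$ (respectively $B^{-1}$) to one of the rank-one or diagonal matrices $\tilde B_{ij}$, $B_{ij}$ of \eqref{Bmat}, \eqref{tildeBmat}; this is exactly how the limit fields $\tilde V_{ij}$, $V_{ij}$ of \eqref{def:Vij}, \eqref{def:tildeVij} arise. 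Choosing $A_{\la,\ga}$ to normalize the $L^2$-norm of the kernel fixes the exponents and the limit, and a dominated-convergence argument, with an $L^2$-envelope built from $\rho^{-1}$, promotes pointwise kernel convergence to $L^2$ while rendering negligible both the $b\to a_\infty$ and the sum-to-integral replacements.

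For part (i) (incongruous scaling, $b_{21}b_{22}\ne0$) the matrix $B$ is genuinely oblique, so under the rescaling $\mbt\mapsto(\la t_1,\la^\ga t_2)^\top$ the two lattice directions are always mixed and the balance between them is governed by $\la$ versus $\la^\ga$, i.e.\ by the sign of $\ga-1$; this is why $\ga^X_0=1$ irrespective of $q_1,q_2$. In each regime the limiting kernel reduces to a projection along the dependence axis $\mbb_2\cdot\mbt=0$ (Proposition \ref{depaxis}), and this reduction is insensitive to whether $\tilde Q_2$ lies above or below $1$. Consequently the kernel-convergence computation coincides with that of Theorem \ref{thmR12}(i) and yields the same limits $V^X_+=V_{11}$, $V^X_-=V_{21}$, $V^X_0=V_{01}$; the only place the present hypothesis $\tilde Q_2<1$ intervenes is in the existence of these integrals, which is provided by Proposition \ref{Vexist}(iii).

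For part (ii) (congruous scaling, $b_{21}=0$) the matrix $B$ is upper triangular, the dependence axis is the horizontal coordinate axis (Proposition \ref{depaxis}), and the transition occurs at the intrinsic ratio $\ga^X_0=q_1/q_2$. For $\ga>q_1/q_2$ one is in the same regime as in Theorem \ref{thmR12}(ii), since $\tilde Q_1<1$ in both cases, and obtains $V^X_+=V_{11}$; for $\ga<q_1/q_2$ the regime changes and, because now $\tilde Q_2<1$, the relevant limit is the \emph{sum-over-innovations} field $V^X_-=V_{22}$ of \eqref{def:tildeVij} rather than the $\tilde V_{11}$ of Theorem \ref{thmR12}(ii), while at $\ga=q_1/q_2$ the well-balanced limit is $\tilde V_{00}$. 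I expect the principal obstacle throughout to be the $L^2$-kernel analysis in the non-summable regime $\tilde Q_2<1$ (whence $\tilde Q_1<\tilde Q_2<1$): the formal integral $\int a_\infty^2$ diverges over the relevant cone, so one must truncate, rescale, and control the boundary (edge) contributions to extract a finite limiting variance, and at the same time dominate, uniformly in $\la$, the errors coming from Assumption B and from the passage from sums to integrals. Securing this uniform domination, by means of the explicit exponents $q_1,q_2$ and the constraint $Q\in(1,2)$, is the crux of the argument.
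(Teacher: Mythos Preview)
Your proposal is correct and follows essentially the same route as the paper: reduce \eqref{scale1} to $L^2$-convergence of rescaled kernels via a CLT for linear forms in i.i.d.\ innovations, observe that Case (i) and the subcases $\ga\ge q_1/q_2$ of Case (ii) are already covered by the computations in Theorems \ref{thmR11}--\ref{thmR12}, and for the only new subcase (Case (ii), $\ga<q_1/q_2$, limit $V_{22}$) split the inner integral by a truncation in the first coordinate and handle the two pieces separately. The paper packages the CLT step as Proposition \ref{prop} (an $L^2$ criterion with a case-dependent affine embedding $\mbs=\lceil A\Lambda\mbu\rceil$, where $A$ is \emph{not} simply $B^{-1}$ in the $V_{ij}$ cases but a shear chosen to decouple coordinates) rather than via Lindeberg, and the ``boundary (edge) contributions'' you anticipate are in fact just the large-$|t_1|$ tail of the inner integral, not rectangle-boundary effects; otherwise your outline matches the paper's argument.
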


Theorem \ref{thmR1}  discusses the case $q_1 = q_2$ when the dependence axis is undefined.

\begin{thm} \label{thmR1}  Let $q_1 = q_2=:q$ and $\tilde Q_1 = \tilde Q_2 =: \tilde Q$.  Then the convergence in \eqref{scale1}
	holds for all $\gamma >0 $ in
	
	\medskip
	
	\noi (i) Case $q\in (1,\frac{3}{2})$ or $\tilde Q > 1 $ with $\ga^X_0 = 1$, $V^X_+ = \tilde V_{02}$, $V^X_- = \tilde V_{01}$, $V^X_0 = \tilde V_{0}$.
	
	\medskip
	
	\noi (ii) Case $q\in (\frac{3}{2},2)$ or $\tilde Q < 1 $ with $\ga^X_0 = 1$, $V^X_+ = V_{10}$, $V^X_- = V_{20}$, $V^X_0 = V_{0}. $
	
\end{thm}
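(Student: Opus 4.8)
The plan is to establish the convergence in \eqref{scale1} for the diagonal case $q_1=q_2=:q$ by reducing the partial sums $S^X_{\la,\gamma}(\mbx)$ to a stochastic integral against the white noise $W$ driving the innovations, and then showing that as $\la\to\infty$ the integrand converges in $L^2(\R^2)$ to the kernel of one of the limit RFs $\tilde V_{02},\tilde V_{01},\tilde V_0$ (for $q\in(1,\tfrac32)$) or $V_{10},V_{20},V_0$ (for $q\in(\tfrac32,2)$). First I would use the moving-average representation \eqref{Xlin2} together with the white-noise representation of the i.i.d.\ innovations to write $S^X_{\la,\gamma}(\mbx)=\int_{\R^2} g_{\la,\gamma}(\mbx,\mbu)\,W(\d\mbu)$ where $g_{\la,\gamma}(\mbx,\mbu)=\sum_{\mbt\in(\0,(\la x_1,\la^\gamma x_2)^\top]\cap\Z^2} b(\mbt-\lfloor\mbu\rfloor)$, so that, since the $\vep$'s are i.i.d.\ with unit variance, it suffices to prove $L^2$-convergence of $A^{-1}_{\la,\gamma}g_{\la,\gamma}(\mbx,\cdot)$ to the relevant limit kernel. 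The standard route (as in \cite{pils2017}) is to isolate the Gaussian scaling part: the dominant contribution comes from replacing $b$ by its asymptotic form $a_\infty(B\,\cdot)=\rho(B\,\cdot)^{-1}L(B\,\cdot)$ from \eqref{bcoef}, controlling the $o(1)$ remainder by a dominated-convergence argument.

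The crux is the change of variables that diagonalizes the anisotropy. Because $q_1=q_2=q$, the function $\rho$ in \eqref{rh} is homogeneous of a single degree $q$, so $\rho(B(c_1 t_1,c_2 t_2)^\top)$ scales in a way that depends on whether $\gamma>1$, $\gamma<1$, or $\gamma=1$. I would substitute $\mbt=(\la s_1,\la^\gamma s_2)^\top$ and $\mbu=(\la v_1,\la^\gamma v_2)^\top$ (with the appropriate Jacobian $\la^{1+\gamma}$) and examine which rows of the matrix $B$ survive in the limit. For $\gamma>1$ the $t_2$/$v_2$-direction dominates, killing the first column's contribution to the argument of $\rho$ and leaving a degenerate matrix of the type $\tilde B_{02}$ or $B_{20}$; for $\gamma<1$ the reverse happens, producing $\tilde B_{01}$ or $B_{10}$; and $\gamma=1$ retains the full $B$ (giving $\tilde V_0=V_0$). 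This is exactly why $\ga^X_0=1$ independently of $q$: the transition is forced by the equality of anisotropy exponents, not by $q_1/q_2$. The split between parts (i) and (ii) by $q\lessgtr\tfrac32$ (equivalently $\tilde Q\gtrless 1$) comes from whether the limiting kernel is square-integrable in the ``collapsed'' form \eqref{def:Vij} (integrating $a_\infty(\tilde D\mbt-\mbu)$ over $(\0,\mbx]$) or only in the ``dual'' form \eqref{def:tildeVij}; Proposition \ref{Vexist}(iv)--(v) guarantees exactly the right objects exist in each subregime, and I would invoke it to identify the limits with FBS.

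The main obstacle will be the uniform $L^2$-integrability needed to upgrade pointwise kernel convergence to $L^2$-convergence and hence to convergence of finite-dimensional distributions. Concretely, after rescaling one must dominate $A^{-1}_{\la,\gamma}g_{\la,\gamma}$ by a fixed $L^2(\R^2)$ function uniformly in large $\la$, handling separately the ``inner'' region where $B\mbt$ is of order the scaling and the ``outer'' tails where the $o(1)$ in \eqref{bcoef} and the boundary discretization must be shown negligible; the borderline integrability at $\tilde Q=1$ (the threshold $q=\tfrac32$) is where the estimates are most delicate and where the correct choice of normalization $A_{\la,\gamma}$ must be pinned down. Once the dominating function is in hand, Fatou/dominated convergence gives $L^2$-convergence of the single integral, linearity and the Cramér--Wold device extend this to all finite linear combinations over finitely many $\mbx$, and the Gaussian limit follows from the white-noise representation; the identification of $V^X_\pm,V^X_0$ with the matrices in \eqref{Bmat}--\eqref{tildeBmat} then completes the proof.
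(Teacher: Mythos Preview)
Your overall strategy---rewriting $S^X_{\la,\gamma}(\mbx)$ as a linear form in the innovations and establishing $L^2$-convergence of the rescaled kernel---is the paper's. However, two specific steps are wrong.

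First, the substitution $\mbu=(\la v_1,\la^\gamma v_2)^\top$ is not the right change of variables. In Case~(i) the paper applies Proposition~\ref{prop} with $A=B^{-1}$ and (for $\gamma>1$) $\Lambda=\operatorname{diag}(\la^\gamma,\la^\gamma)$, so the innovation index is rescaled as $\mbs=\lceil B^{-1}\Lambda\mbu\rceil$. The point is that one needs $\Lambda^{-1}B\Lambda'\to\tilde B_{02}$ (with $\Lambda'=\operatorname{diag}(\la,\la^\gamma)$), which requires both the premultiplication by $B^{-1}$ and an \emph{isotropic} rescaling by $\la^\gamma$ in the $\mbu$-variable. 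Under your anisotropic scaling of $\mbu$ the argument $B\Lambda(\mbs-\mbv)$ collapses at leading order to a rank-one expression in $s_2-v_2$; the $v_1$-dependence is lost and the rescaled kernel does not converge in $L^2(\R^2)$ to $\tilde h_{02}$. Analogously, in Case~(ii) the paper changes variables in the \emph{integration} coordinate, $\mbt\mapsto B^{-1}\Lambda'\mbt$ with $\Lambda'=\operatorname{diag}(\la,\la)$, and takes $A$ to be the unipotent shear in \eqref{AL} rather than the identity.

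Second, your identification of the degenerate limit matrices in Case~(ii) is reversed: you predict $B_{20}$ for $\gamma>1$ and $B_{10}$ for $\gamma<1$, whereas the theorem gives $V^X_+=V_{10}$ and $V^X_-=V_{20}$. The swap occurs precisely because in the dual form \eqref{def:tildeVij} the limiting matrix arises from $\Lambda^{-1}B^{-1}\Lambda'$ (involving $B^{-1}$, not $B$), so your ``which column survives'' heuristic must be applied to $B^{-1}$ and yields the opposite row pattern. Finally, in Case~(ii) a single dominating function does not suffice: the paper splits $\tilde g_\la=\tilde g_{\la,0}+\tilde g_{\la,1}$ according to $|t_2|\gtrless\tilde C$, proves $\|\tilde g_{\la,1}-h_{10}\|\to0$ by dominated convergence on the bulk, and kills the tail $\|\tilde g_{\la,0}\|$ via Minkowski's inequality before sending $\tilde C\to\infty$, as in \eqref{limC}.
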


\begin{rem} {\rm  In the above theorems the convergence in \eqref{scale1} holds under normalization
		\begin{eqnarray}\label{Ala}
		A_{\la, \ga} = \la^{H(\ga)},
		\end{eqnarray}
		where $H(\ga) >0$ is defined in the proof of these theorems below. Under congruous scaling  $b_{21} = 0$  the exponent $H(\ga)$ in
		\eqref{Ala} is the same
		as in the case $B = I $ (= the identity matrix) studied in \cite{pils2017}.
		As shown in \cite{ps2016}, $V^X_\ga $ in \eqref{scale1} satisfies the following self-similarity property:
		\begin{equation}\label{ssH}
		\big\{ V^X_\ga (\la^\Gamma{\mbx}), \, \mbx \in \R^2_+ \big\} \eqfdd \big\{ \la^{H(\ga)} V^X_\ga ({\mbx}), \, \mbx \in \R^2_+ \big\} \quad 
		\forall \la >0,
		\end{equation}
		where 
		$\la^\Gamma = \operatorname{diag}(\la, \la^\ga)$ and $H(\ga)$ is the same as in \eqref{Ala}.
		Note that an FBS  $B_{{\cal H}_1, {\cal H}_2}$ with ${\cal H}_i \in (0,1]$, $i=1,2$, satisfies \eqref{ssH} with
		\begin{equation}\label{Hga}
		H(\ga) = {\cal H}_1 + \ga {\cal H}_2.
		\end{equation}
		Thus, in Theorems \ref{thmR11}--\ref{thmR1} in the case of unbalanced (FBS) limits $A_{\la,\gamma}$ in \eqref{Ala}
		can also be identified from \eqref{Hga} and the expressions for $H_i$, $\tilde H_i$, $i=1,2$, in \eqref{Hnota}.}
	
\end{rem}

\section{Proofs of Proposition \ref{Vexist} and Theorems \ref{thmR11}--\ref{thmR1} }


For $\gamma >0$, the limit distribution of $S^X_{\lambda, \gamma}$ is obtained using a general criterion
for the weak convergence of linear forms in i.i.d.\ r.v.s towards a stochastic integral w.r.t.\ the white noise.
Consider a linear form
\begin{equation} \label{Sg}
S(g) := \sum_{{\mbs} \in \Z^2}  g({\mbs}) \vep({\mbs})
\end{equation}
with real coefficients $\sum_{{\mbs} \in \Z^2} g({\mbs})^2 < \infty$ and innovations satisfying Assumption A.
The following proposition
extends (\cite{book2012}, Prop.~14.3.2), (\cite{sur2019b}, Prop.~5.1), (\cite{sur2019a}, Prop.~3.1).

\begin{prop}\label{prop}
	For $\lambda >0$, let $S(g_\lambda)$ be as in \eqref{Sg}.
	Assume that for some $2 \times 2$ non-degenerate matrix $A$ and $\Lambda = \operatorname{diag}(l_1, l_2)$ with $l_i = l_i(\lambda)$, $i=1,2$, such that $l_1 \wedge l_2 \to \infty$, $\lambda \to\infty$, the functions
	\begin{equation}\label{def:tildegn}
	\tilde g_\lambda (\boldsymbol{\boldsymbol{u}}) := |\operatorname{det}(A\Lambda)|^{1/2} g_\lambda (\lceil A \Lambda \boldsymbol{u} \rceil), \quad \boldsymbol{u} \in \R^2, \quad \lambda > 0,
	\end{equation}
	tend to a limit $h$ in $L^2(\R^2)$, i.e.\
	\begin{equation}\label{L2conv}
	\| \tilde g_\lambda - h \|^2 = \int_{\R^2} | \tilde g_\lambda (\boldsymbol{u}) - h(\boldsymbol{u}) |^2 \d \boldsymbol{u} \to 0, \quad \lambda \to \infty.
	\end{equation}
	Then
	\begin{equation}\label{lim:d}
	S(g_\lambda) \limd I (h) = \int_{\R^2} h(\boldsymbol{u}) W(\d \boldsymbol{u}), \quad \lambda \to \infty.
	\end{equation}
\end{prop}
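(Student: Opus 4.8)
The plan is to reduce the statement to the Lindeberg--Feller central limit theorem for the triangular array $X_{\la,\mbs} := g_\la(\mbs)\vep(\mbs)$, $\mbs \in \Z^2$, whose entries are independent within each row and centered by Assumption A. Since $I(h)$ is centered Gaussian with variance $\|h\|^2$, it suffices to verify two things, namely that $\operatorname{Var}(S(g_\la)) \to \|h\|^2$ and that the Lindeberg condition holds; the convergence $S(g_\la) \limd N(0,\|h\|^2) \eqd I(h)$ claimed in \eqref{lim:d} then follows at once.

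First I would record the elementary change-of-variables identity that converts discrete sums of $g_\la$ into integrals of $\tilde g_\la$. The sets $C_\mbs := \{ \mbu \in \R^2 : \lceil A\Lambda \mbu \rceil = \mbs \} = (A\Lambda)^{-1}\big((s_1-1,s_1]\times(s_2-1,s_2]\big)$, $\mbs \in \Z^2$, partition $\R^2$; each has Lebesgue measure $|\det(A\Lambda)|^{-1}$, and by \eqref{def:tildegn} the function $\tilde g_\la$ is constant and equal to $|\det(A\Lambda)|^{1/2} g_\la(\mbs)$ on $C_\mbs$. Hence, for every Borel $\psi \ge 0$,
\[
\sum_{\mbs \in \Z^2} \psi\big(g_\la(\mbs)\big) = |\det(A\Lambda)| \int_{\R^2} \psi\big(|\det(A\Lambda)|^{-1/2}\tilde g_\la(\mbu)\big)\, \d \mbu .
\]
Taking $\psi(z) = z^2$ gives $\operatorname{Var}(S(g_\la)) = \sum_{\mbs} g_\la(\mbs)^2 = \|\tilde g_\la\|^2$, which tends to $\|h\|^2$ by \eqref{L2conv}, settling the variance and matching $\E I(h)^2$.

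The real work is the Lindeberg condition $L_\la(\tau) := \sum_{\mbs}\E\big[X_{\la,\mbs}^2 \1(|X_{\la,\mbs}|>\tau)\big] \to 0$ for each fixed $\tau>0$. Writing $\E[X_{\la,\mbs}^2\1(|X_{\la,\mbs}|>\tau)] = g_\la(\mbs)^2\, \E[\vep(\boldsymbol{0})^2 \1(|\vep(\boldsymbol{0})|> \tau/|g_\la(\mbs)|)]$ and splitting according to $|g_\la(\mbs)| \le \delta$ or $|g_\la(\mbs)| > \delta$ for a fixed $\delta > 0$, I would bound
\[
L_\la(\tau) \le \E\big[\vep(\boldsymbol{0})^2 \1(|\vep(\boldsymbol{0})| > \tau/\delta)\big]\, \|\tilde g_\la\|^2 + \sum_{\mbs:\, |g_\la(\mbs)| > \delta} g_\la(\mbs)^2 .
\]
By the displayed identity with $\psi(z) = z^2 \1(|z|>\delta)$, the last sum equals $\int_{E_\la} |\tilde g_\la(\mbu)|^2\, \d\mbu$ with $E_\la := \{ \mbu : |\tilde g_\la(\mbu)| > \delta\,|\det(A\Lambda)|^{1/2} \}$. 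Here is the crux: since $\det(A)\ne 0$ and $l_1 l_2 \to \infty$ (as $l_1\wedge l_2 \to\infty$), the threshold $\delta\,|\det(A\Lambda)|^{1/2}\to\infty$, so Chebyshev gives $|E_\la| \le \|\tilde g_\la\|^2/(\delta^2 |\det(A\Lambda)|) \to 0$; then, using $|a|^2 \le 2|a-b|^2 + 2|b|^2$, $\int_{E_\la}|\tilde g_\la|^2\,\d\mbu \le 2\|\tilde g_\la - h\|^2 + 2\int_{E_\la}|h|^2\,\d\mbu \to 0$, the first term by \eqref{L2conv} and the second by absolute continuity of the integral of $|h|^2$. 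Thus $\limsup_\la L_\la(\tau) \le \|h\|^2\, \E[\vep(\boldsymbol{0})^2\1(|\vep(\boldsymbol{0})|>\tau/\delta)]$, and letting $\delta\downarrow 0$ (using $\E\vep(\boldsymbol{0})^2 = 1 < \infty$ and dominated convergence) yields $L_\la(\tau) \to 0$.

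With variance convergence and the Lindeberg condition established, the Lindeberg--Feller theorem gives $S(g_\la) \limd N(0,\|h\|^2)$, which is \eqref{lim:d}. I expect the one genuine obstacle to be exactly this negligibility step: $L^2$-convergence of $\tilde g_\la$ provides no pointwise control, so one cannot argue through a uniform bound $\max_\mbs |g_\la(\mbs)| \to 0$; the argument instead rests on the fact that the exceptional set $E_\la$ has vanishing measure while $\int_{E_\la}|\tilde g_\la|^2$ is controlled via the $L^2$-limit $h$, which is what replaces uniform asymptotic negligibility here.
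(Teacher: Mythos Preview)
Your argument is correct and constitutes a genuinely different proof from the paper's. The paper proceeds by a three-step $\epsilon$-approximation: it approximates $h$ by a simple function $h_\epsilon$ supported on finitely many dyadic squares, defines a corresponding discrete function $h_{\epsilon,\la}$, and then shows (i) $\E|S(g_\la)-S(h_{\epsilon,\la})|^2<\epsilon$, (ii) $S(h_{\epsilon,\la})\limd I(h_\epsilon)$ via the classical CLT applied to the independent block sums $W_\la(\Box^K_{\mbk}) = |\det(A\Lambda)|^{-1/2}\sum_{\mbs\in A\Lambda\Box^K_{\mbk}}\vep(\mbs)$ over disjoint parallelograms, and (iii) $\E|I(h_\epsilon)-I(h)|^2<\epsilon$. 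Your route bypasses the simple-function layer entirely by invoking Lindeberg--Feller directly on the full array $g_\la(\mbs)\vep(\mbs)$; the key device is the exact identity $\sum_{\mbs}\psi(g_\la(\mbs)) = |\det(A\Lambda)|\int\psi(|\det(A\Lambda)|^{-1/2}\tilde g_\la)$, which converts the Lindeberg tail sum into an integral of $|\tilde g_\la|^2$ over a set of vanishing measure, controlled through the $L^2$-limit $h$. Your proof is shorter and more self-contained; the paper's approach, by contrast, makes the convergence $W_\la(\Box)\limd W(\Box)$ explicit and is structured so that the CLT step could in principle be swapped for a non-Gaussian limit theorem (relevant to the stable-law extensions in the references the paper cites). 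One small point worth a sentence in a written version: Lindeberg--Feller is usually stated for finite rows, so you should either note that the $L^2$-convergent series $S(g_\la)$ can be truncated with uniformly negligible tails, or cite a version for countable arrays.
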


\begin{proof}
Denote by $S(\R^2)$ the set of simple functions $f : \R^2 \to \R$, which are finite linear combinations of indicator functions of disjoint squares $\Box_{\boldsymbol{k}}^K := \prod_{i=1}^2 (k_i/K, (k_i+1)/K]$,
$\boldsymbol{k} \in \Z^2$, $K \in \N$. The set $S(\R^2)$ is dense in $L^2(\R^2)$: given $f \in L^2(\R^2)$, for every $\epsilon >0$ there exists $f_\epsilon \in S(\R^2)$ such that $\| f-f_\epsilon\|<\epsilon$. \eqref{lim:d} follows once we show that for every $\epsilon>0$ there exists $h_\epsilon \in S(\R^2)$ such that as $\lambda \to \infty$, the following relations (i)--(iii) hold:
(i) $\E |S(g_\lambda)-S(h_{\epsilon,\lambda})|^2 < \epsilon$,
(ii) $S(h_{\epsilon,\lambda}) \limd I (h_\epsilon)$,
(iii) $\E|I (h_\epsilon)-I (h)|^2 < \epsilon$, where
\begin{equation}\label{def:hepsla}
h_{\epsilon,\lambda} (\boldsymbol{s}) := |\operatorname{det}(A\Lambda)|^{-1/2} h_\epsilon( (A\Lambda)^{-1} \boldsymbol{s} ), \quad \boldsymbol{s} \in \Z^2, \quad \lambda>0.
\end{equation}
As for (i), note that
\begin{align*}
\E |S(g_\lambda)-S(h_{\epsilon,\lambda})|^2 &= \int_{\R^2} |g_\lambda(\lceil \boldsymbol{s} \rceil)-h_{\epsilon,\lambda}(\lceil \boldsymbol{s} \rceil)|^2 \d \boldsymbol{s}\\
&=  |\operatorname{det}(A\Lambda)| \int_{\R^2} | g_\lambda ( \lceil A \Lambda \boldsymbol{u} \rceil ) - h_{\epsilon,\lambda} (\lceil A \Lambda \boldsymbol{u} \rceil) |^2 \d \boldsymbol{u}
= \| \tilde g_\lambda - \tilde h_{\epsilon,\lambda} \|^2,
\end{align*}
where $\tilde h_{\epsilon,\lambda}$ is derived from $h_{\epsilon,\lambda}$ in the same way as $\tilde g_\lambda$ is derived from $g_\lambda$ in \eqref{def:tildegn}. To prove (i) we need to find suitable  $h_\epsilon \in S(\R^2)$ and thus $h_{\epsilon,\lambda}$ in \eqref{def:hepsla}. By \eqref{L2conv}, there exists $\lambda_0 >0$ such that $\| \tilde g_\lambda - h \| < \epsilon/4$,
$\forall \lambda \ge \lambda_0$. Given $\tilde g_{\lambda_0} \in L^2(\R^2)$, there exists  $h_\epsilon \in S(\R^2)$ such that $\| \tilde g_{\lambda_0} - h_\epsilon \| < \epsilon/4$. Note that
$$
\| h_{\epsilon} - \tilde h_{\epsilon,\lambda} \|^2 = \int_{\R^2} |h_\epsilon(\boldsymbol{u}) - h_\epsilon ((A\Lambda)^{-1} \lceil A \Lambda \boldsymbol{u} \rceil ) |^2 \d \boldsymbol{u} \to 0, \quad \lambda \to \infty,
$$
follows from $|(A\Lambda)^{-1} \lceil A \Lambda \boldsymbol{u} \rceil - \boldsymbol{u}|
= |(A \Lambda)^{-1}(\lceil A \Lambda \boldsymbol{u} \rceil - A \Lambda \boldsymbol{u})| \le C \min(l_1, l_2)^{-1} = o(1)$ uniformly
in $\boldsymbol{u} \in \R^2$
and the fact that $h_\epsilon$ is bounded and has a compact support. Thus, there exists $\lambda_1 >0$ such that  $\| h_\epsilon - \tilde h_{\epsilon,\lambda} \| < \epsilon/4$, $\forall \lambda \ge \lambda_1 $.
Hence, 
$$\| \tilde g_\lambda - \tilde h_{\epsilon,\lambda} \| \le \| \tilde g_\lambda - h \| + \| h - \tilde g_{\lambda_0} \| + \| \tilde g_{\lambda_0} - h_\epsilon \| + \| h_\epsilon - \tilde h_{\epsilon,\lambda} \| < \epsilon, \quad \forall \lambda \ge \lambda_0 \vee \lambda_1,
$$
completing the proof of (i). The above reasoning implies also  (iii) since
$(\E | I(h_\epsilon) - I(h) |^2)^{1/2} = \| h_\epsilon - h \| \le \| h_\epsilon - \tilde g_{\lambda_0} \| + \| \tilde g_{\lambda_0} - h \| < \epsilon/2$.

It remains to prove (ii). The step function $h_\epsilon$ in the above proof of (i) can be written as
$h_\epsilon (\boldsymbol{u}) = \sum_{\boldsymbol{k} \in \Z^2} h_\epsilon^{\Box_{\boldsymbol{k}}^K} \1 ( \boldsymbol{u} \in \Box_{\boldsymbol{k}}^K )$,  $\boldsymbol{u} \in \R^2$,
$(\exists K \in \N)$, where $h_\epsilon^{\Box_{\boldsymbol{k}}^K} = 0$ except for a finite number of 
$\boldsymbol{k} \in \Z^2$.
Then, by \eqref{def:hepsla},
$$
h_{\epsilon,\lambda} (\boldsymbol{s}) = |\operatorname{det}(A\Lambda)|^{-1/2} \sum_{\boldsymbol{k} \in \Z^2} h_\epsilon^{\Box_{\boldsymbol{k}}^K} \1 ( \boldsymbol{s} \in A \Lambda \Box_{\boldsymbol{k}}^K ), \quad \boldsymbol{s} \in \Z^2,
$$
and $
S(h_{\epsilon,\lambda}) =  \sum_{\boldsymbol{k} \in \Z^2} h_\epsilon^{\Box^K_{\boldsymbol{k}}} W_\lambda (\Box_{\boldsymbol{k}}^K), $
where
$$
W_\lambda (\Box_{\boldsymbol{k}}^K) :=  |\operatorname{det}(A\Lambda)|^{-1/2} \sum_{\boldsymbol{s} \in A \Lambda \Box_{\boldsymbol{k}}^K} \varepsilon(\boldsymbol{s}), \quad \boldsymbol{k} \in \Z^2.
$$
Since the r.v.s $\vep(\boldsymbol{s})$, $\boldsymbol{s} \in \Z^2$, are i.i.d.\ with $\E \varepsilon (\boldsymbol{0}) = 0$, $\E |\varepsilon (\boldsymbol{0})|^2 = 1$ and the parallelograms $A \Lambda \square_{\boldsymbol{k}}^K$, $\boldsymbol{k} \in \Z^2$, are disjoint, the r.v.s $W_\lambda (\Box_{\boldsymbol{k}}^K)$, $\boldsymbol{k} \in \Z^2$, are independent and satisfy $\E W_\lambda (\Box_{\boldsymbol{k}}^K) = 0$,
$\E | W_\lambda (\Box_{\boldsymbol{k}}^K ) |^2 = |\operatorname{det}(A\Lambda)|^{-1} \sum_{\boldsymbol{s} \in \Z^2} \1 ( \boldsymbol{s} \in A \Lambda \Box_{\boldsymbol{k}}^K) \to \int_{\Box_{\boldsymbol{k}}^K} \d \boldsymbol{u},$ $\lambda \to \infty$.
Hence, by the classical CLT, for every $J \in \N$,
$$
\big\{ W_\lambda (\Box^K_{\boldsymbol{k}}), \, \boldsymbol{k} \in \{-J, \dots, J\}^2 \big\}
\limd \big\{W(\Box_{\boldsymbol{k}}^K), \, \boldsymbol{k} \in \{-J, \dots, J\}^2 \big\}, \quad \lambda \to \infty,
$$
implying the convergence
$S(h_{\epsilon,\lambda}) \limd \sum_{\boldsymbol{k} \in \Z^2} h_\epsilon^{\Box_{\boldsymbol{k}}^K} W(\Box_{\boldsymbol{k}}^K ) = I(h_\epsilon)$, $\lambda \to \infty$, or part (ii), and
completing the proof of the proposition.
\end{proof}

We shall also need some properties of the generalized homogeneous function $\rho$ in
 \eqref{rh} for $q_i>0$, $i=1,2$, with $Q:=\frac{1}{q_1}+\frac{1}{q_2}$. Note 
 the elementary inequality
\begin{equation}\label{rhoineq}
C_1 \rho(\mbu)^{1/q_1}  \le (|u_1|^2 + |u_2|^{2q_2/q_1})^{1/2} \le C_2 \rho(\mbu)^{1/q_1}, \quad \mbu \in \R^2,
\end{equation}
with $C_i >0$, $i=1,2$, independent of $\mbu $, see   (\cite{sur2019a}, (2.16)). From \eqref{rhoineq} and  (\cite{pils2017}, Prop.~5.1)
we obtain for any $\delta >0$,
\begin{equation}\label{intrho}
\int_{\R^2} \rho (\boldsymbol{u})^{-1} \1 ( \rho(\boldsymbol{u}) < \delta ) \d \boldsymbol{u}
< \infty \Longleftrightarrow Q>1, \quad \int_{\R^2} \rho (\boldsymbol{u})^{-2} \1 ( \rho(\boldsymbol{u}) \ge \delta ) \d \boldsymbol{u}
< \infty \Longleftrightarrow Q<2.
\end{equation}
Moreover,  with $q= \max\{ q_1, q_2, 1\}$,
\begin{equation} \label{rhoineq2}
\rho(\boldsymbol{u}+\boldsymbol{v})^{1/q}
	\le \rho(\boldsymbol{u})^{1/q} + \rho(\boldsymbol{v})^{1/q}, \quad   \boldsymbol{u}, \boldsymbol{v} \in \R^2,
\end{equation}	
see  (\cite{pils2017}, (7.1)), and, for $1< Q < 2$,
\begin{equation}\label{def:conv}
(\rho^{-1} \star \rho^{-1}) (\boldsymbol{u}) := \int_{\R^2} \rho (\boldsymbol{v})^{-1} \rho (\boldsymbol{v} + \boldsymbol{u})^{-1} \d \boldsymbol{v}
= \tilde \rho (\boldsymbol{u})^{-1} \tilde  L\big( |u_1|/\tilde \rho (\boldsymbol{u})^{1/\tilde q_1} \big), \quad \boldsymbol{u} \in \R^2_0,
\end{equation}
where with $\tilde q_i := q_i (2-Q)$, $i=1,2$,
\begin{equation}\label{tilderho}
\tilde \rho (\boldsymbol{u}) := |u_1|^{\tilde q_1}+|u_2|^{\tilde q_2}, \quad \boldsymbol{u} \in \R^2,
\end{equation}
and $\tilde L (z) := (\rho^{-1} \star \rho^{-1}) (z,(1-z^{\tilde q_1})^{1/\tilde q_2})$,
$z \in [0,1]$, is  a continuous function. Note $\tilde q_2 < 1 $ (respectively, $\tilde q_1 < 1 $) is
equivalent to $\tilde Q_1 > 1 $ (respectively, $\tilde Q_2 > 1$). The proof of \eqref{def:conv}
is similar to that of (\cite{pils2017}, (5.6))   (see also Proposition \ref{conaxis} below).

\begin{proof}[Proof of Proposition \ref{Vexist}]
Since $\tilde V_{ij}(\boldsymbol{1}) = I (\tilde h_{ij})$, $V_{ij}(\boldsymbol{1}) = I (h_{ij})$,
it suffices to prove
\begin{equation} \label{hij}
\|\tilde h_{ij}\| < \infty, \quad \| h_{ij}\| < \infty
\end{equation}
for suitable $i,j$ in the corresponding regions of parameters $q_1, q_2$.
Using the boundedness of $L_\pm $ in \eqref{ainfty} and \eqref{rhoineq}
we can replace $|a_\infty| $ by $\rho^{-1} $
in the subsequent proofs of \eqref{hij}. Hence and from \eqref{def:conv} it follows that
\begin{align}\label{hconv}
\| \tilde h_{ij} \|^2 \le C \int_{(0,1]^2 \times (0,1]^2} (\rho^{-1} \star \rho^{-1} ) ( \tilde B_{ij} (\boldsymbol{t}-\boldsymbol{s}) ) \d \boldsymbol{t} \d \boldsymbol{s} \le C \int_{(0,1]^2 \times (0,1]^2}  \tilde \rho ( \tilde B_{ij}(\boldsymbol{t}-\boldsymbol{s}))^{-1} \d \boldsymbol{t} \d \boldsymbol{s}.
\end{align}

\smallskip

\noi \underline{Existence of $\tilde V_{00}$.} Relation $\|\tilde h_{00}\|^2 \le C \int_{[-1,1]^2} \tilde \rho(b_{11}t_1,b_{22} t_2)^{-1} \d t_1 \d t_2 < \infty$ follows from \eqref{intrho} since $\frac{1}{\tilde q_1} + \frac{1}{\tilde q_2} = \frac{Q}{2-Q}>1$ for $1<Q<2$.
This proves the existence of $\tilde V_{00}$ in all cases (i)--(iii) of Proposition \ref{Vexist}.

\smallskip

\noi \underline{Existence of $\tilde V_{20}$, $\tilde V_{21} $, $\tilde V_{22}$.}  From \eqref{hconv} we get $\|\tilde h_{20}\|^2 \le
C \int_{ [-1,1]^2} |b_{21} t_1 + b_{22} t_2|^{-\tilde q_2} \d t_1 \d t_2    < \infty $ since
$\tilde q_2 < 1 $. The proof of $\|\tilde h_{21}\| < \infty$, $\|\tilde h_{22}\| < \infty $ is completely analogous.
This proves the existence of $\tilde V_{20}$, $\tilde V_{21} $ and $\tilde V_{22}$  for $\tilde Q_1 > 1 $.

\smallskip

\noi \underline{Existence of  $\tilde V_{11}$.} Similarly as above, from \eqref{hconv} we get
$\|\tilde h_{11}\|^2 \le  
C \int_{ [-1,1]} |b_{11} t|^{-\tilde q_1} \d t< \infty$ since $\tilde q_1>1$. This proves the existence of $\tilde V_{11}$ for $\tilde Q_2 > 1 $.

\smallskip

\noi \underline{Existence of  $V_{01}$, $V_{11}$, $V_{21}$.}  We have
\begin{align*}
\|h_{01}\|^2
&\le C\int_{\R^2} \Big( \int_{\R} |t_1|^{-q_1(1-\frac{1}{q_2})} \1  \big( b_{22} t_1 + u_1 \in (0,1], -b_{21} t_1 + u_2 \in (0,1] \big) \d t_1 \Big)^2 \d \boldsymbol{u} \\
&\le C\int_{\R^2} |t_1|^{-q_1(1-\frac{1}{q_2})}  |t_2|^{-q_1(1-\frac{1}{q_2})} \1(|b_{22}(t_1-t_2)| \le 1, |b_{21}(t_1-t_2)| \le 1)
 \d t_1 \d t_2 \\
&\le C\int_{\R^2} |t_1|^{-q_1(1-\frac{1}{q_2})}  |t_2|^{-q_1(1-\frac{1}{q_2})} \1(|t_1-t_2| \le 1)
 \d t_1 \d t_2  < \infty
\end{align*}
since $\frac{1}{2} < q_1(1-\frac{1}{q_2}) < 1 $ or $\tilde Q_1 < 1 < Q$. The proof of $\|h_{11}\| < \infty $
and  $\|h_{21}\| < \infty $ is completely analogous.

\smallskip

\noi \underline{Existence of  $V_{22}$.} Similarly as above,
$\|h_{22}\|^2
\le C\int_{\R^2} |t_1|^{-q_2(1-\frac{1}{q_1})}  |t_2|^{-q_2(1-\frac{1}{q_1})} \1(|t_1-t_2| \le 1)
 \d t_1 \d t_2  < \infty $ since $\frac{1}{2} < q_2(1-\frac{1}{q_1}) < 1 $ or $\tilde Q_2 < 1 < Q$.

\smallskip

\noi \underline{Existence of  $\tilde V_{02}$, $\tilde V_{01}$.} From \eqref{hconv} we get
$\| \tilde h_{02} \|^2 \le C \int_{[-1,1]^2}  \tilde \rho (b_{12}t_2,b_{22}t_2)^{-1} \d \boldsymbol{t} < \infty $ since
$\tilde q = 2(q-1) < 1 $ for $q \in (1,\frac{3}{2})$. The proof of  $\|\tilde  h_{01} \| < \infty $ is completely analogous.

\smallskip

\noi \underline{Existence of  $V_{20}$, $V_{10}$.}
We have
\begin{align*}
\| h_{20} \|^2 &\le C \int_{\R} \Big( \int_{\R^2} \rho(\mbt)^{-1} \1 (b_{22} t_1-b_{12}t_2+u \in (0,1]) \d \mbt \Big)^2 \d u\\
&\le C \int_{\R} \Big( \int_{\R} \rho \big( 1+\frac{b_{12}}{b_{22}}t_2,t_2 \big)^{-1} \d t_2 \int_{\R} |t_1|^{1-q} \1 (b_{22} t_1+u \in (0,1]) \d t_1 \Big)^2 \d u< \infty
\end{align*}
for $q \in (\frac{3}{2},2)$. The proof of  $\|h_{10} \| < \infty $ is completely analogous.
%

\smallskip

\noi \underline{Existence of  $\tilde V_0$, $V_0$.} Relation $\| \tilde h_0 \|^2 \le C \int_{[-1,1]^2} \tilde \rho(B\mbt)^{-1} \d \mbt < \infty$ follows from \eqref{intrho} since $\frac{2}{\tilde q}= \frac{1}{q-1}>1$ for $q \in (1,2)$. We have $\| h_0 \| = \| \tilde h_0 \| < \infty$.

\smallskip

It remains to show the relations \eqref{R11}--\eqref{R10}, which follow from the variance expressions: for any ${\mbx} \in \R^2_+ $,
we have that
\begin{eqnarray}\label{VB}
&\E |\tilde V_{22}({\mbx})|^2 = \tilde \sigma^2_{22} x_1^2 x_2^{2\tilde H_2}, \quad \E |\tilde V_{21}({\mbx})|^2 = \tilde \sigma^2_{21}
x_1^{2\tilde H_2} x_2^2, \quad
\E |\tilde V_{11}({\mbx})|^2 = \tilde \sigma^2_{11} x_1^{2\tilde H_1} x_2^2,  \\
&\E |V_{11}({\mbx})|^2 = \sigma^2_{11} x_1^{2H_1} x_2, \quad \E |V_{21}({\mbx})|^2 =
\sigma^2_{21} x_1 x_2^{2H_1}, \quad
\E |V_{22}({\mbx})|^2 = \sigma^2_{22} x_1 x_2^{2H_2}, \nn \\
&\E |\tilde V_{01}({\mbx})|^2 = \tilde \sigma^2_{01} x_1^{2\tilde H} x_2^2, \quad \E |\tilde V_{02}({\mbx})|^2 = \tilde \sigma^2_{02} x_1^2 x_2^{2\tilde H}, \quad
\E |V_{10}({\mbx})|^2 = \sigma^2_{10} x_1^{2H} x_2, \quad
\E |V_{20}({\mbx})|^2 = \sigma^2_{20} x_1 x_2^{2H}. \nn
\end{eqnarray}
Relations \eqref{VB} follow by a change of variables in the corresponding integrals, using the invariance
property: $\la a_\infty(\la^{\frac{1}{q_1}} t_1, \la^{\frac{1}{q_2}} t_2) = a_\infty (\boldsymbol{t})$, $\boldsymbol{t} \in \R^2_0$, for all $\la  >0$.  E.g., after a change of variables $t_2 \to x_2 t_2$, $u_2 \to x_2 u_2$, $u_1 \to x_2^{\frac{q_2}{q_1}} u_1 $,
the first expectation in \eqref{VB} writes as
$ \E |\tilde V_{22}({\mbx})|^2 = x_1^2 |\operatorname{det} (B)|^{-1}
\int_{\R^2} |\int_0^{x_2} a_\infty (u_1, \allowbreak b_{22}t_2+u_2) \d t_2 |^2 \d {\mbu}
= x_1^2 x_2^{2\tilde H_2} \tilde \sigma^2_{22}$, where
$ \tilde \sigma^2_{22} :=  |\operatorname{det} (B)|^{-1} \int_{\R^2} |\int_0^{1} a_\infty (u_1, b_{22}t_2+u_2) \d t_2 |^2 \d {\mbu} < \infty$.  Proposition~\ref{Vexist} is proved.
\end{proof}

\medskip

Similarly as in \cite{pils2017,sur2019a} and other papers, in Theorems \ref{thmR11}--\ref{thmR22}
we restrict the proof of \eqref{scale1} to one-dimensional convergence at 
$\mbx \in \R^2_+$.
Towards this end, we use Proposition \ref{prop} and rewrite every
$\lambda^{-H(\gamma)}  S^X_{\la,\gamma}(\mbx) \allowbreak = S(g_\la)$ as a linear form in \eqref{Sg} with
\begin{equation} \label{gla1}
g_\la ({\mbu}) := \la^{-H(\gamma)}  \int_{(0, \la x_1] \times (0, \la^\gamma x_2]}
b(\lceil {\mbt} \rceil - {\mbu})\d {\mbt}, \quad \mbu \in \Z^2.
\end{equation}
In what follows, w.l.g., we set $|\operatorname{det}(B)| =1$.

\begin{proof}[Proof of Theorem \ref{thmR11}] \underline{Case (i) and  $\gamma > 1$ or $V^X_+ = \tilde V_{22}$.} Set
$H(\gamma ) =  1 + \gamma (\frac{3}{2}+ \frac{q_2}{2q_1} - q_2) =
1 + \gamma \tilde H_2 $ in agreement with \eqref{Ala}--\eqref{Hga}.
Rewrite
\begin{equation*}
g_{\lambda}(\boldsymbol{s})
= \lambda^{1+\gamma-H(\gamma)} \int_{\R^2} b(\lceil \Lambda' \boldsymbol{t} \rceil - \boldsymbol{s}) \1 ( \Lambda' \boldsymbol{t} \in (\boldsymbol{0}, \lfloor \Lambda' \boldsymbol{x} \rfloor] ) \d \boldsymbol{t}, \quad \boldsymbol{s} \in \Z^2,
\end{equation*}
with $\Lambda' = \operatorname{diag}(\lambda,\lambda^\gamma)$.
Use Proposition \ref{prop} with $A = B^{-1}$ and $\Lambda = \operatorname{diag}(l_1,l_2)$, where $l_1 = \lambda^{\gamma \frac{q_2}{q_1}}$,
$l_2 = \lambda^{\gamma}$. According to the definition in \eqref{def:tildegn},
\begin{align}
\tilde g_\lambda(\boldsymbol{u}) = 
\int_{\R^2}  \lambda^{\gamma q_2}
b ( \lceil \Lambda' \boldsymbol{t} \rceil - \lceil B^{-1}  \Lambda \boldsymbol{u} \rceil ) \1 (
\Lambda' \boldsymbol{t} \in
(\boldsymbol{0}, \lfloor \Lambda' \boldsymbol{x} \rfloor]
) \d \boldsymbol{t}, \quad \boldsymbol{u} \in \R^2, \label{tildeg}
\end{align}
for which we need to show the $L^2$-convergence in \eqref{L2conv} with $h (\boldsymbol{u})$ replaced by
\begin{equation}\label{h22}
\tilde h_{22} (\boldsymbol{u}) := 
\int_{(\boldsymbol{0},\boldsymbol{x}]} a_\infty (\tilde B_{22} \boldsymbol{t} - \boldsymbol{u}) \d \boldsymbol{t}, \quad \boldsymbol{u} \in \R^2,
\end{equation}
where the integrand does not depend on $t_1$.
Note since $q_1 < q_2 $ and $\gamma>1$ that
\begin{equation}\label{point}
\Lambda^{-1} B (\lceil \Lambda' \boldsymbol{t} \rceil - \lceil B^{-1} \Lambda \boldsymbol{u} \rceil) \to \tilde B_{22} \boldsymbol{t} - \boldsymbol{u},
\end{equation}
point-wise for any ${\mbt}, {\mbu} \in \R^2 $ and
therefore, by continuity of $\rho^{-1}$,
\begin{equation}\label{blim1}
\lambda^{\gamma q_2} \rho \big(B (\lceil \Lambda' \boldsymbol{t} \rceil - \lceil B^{-1} \Lambda \boldsymbol{u} \rceil ) \big)^{-1}
\to \rho(\tilde B_{22}\boldsymbol{t} - \boldsymbol{u})^{-1}
\end{equation}
for any $\boldsymbol{t},\boldsymbol{u} \in \R^2$ such that  $\tilde B_{22} \boldsymbol{t}-\boldsymbol{u} \neq \boldsymbol{0}$.
Later use \eqref{bcoef}, \eqref{point}, \eqref{blim1} and continuity of $L_\pm$ to get
\begin{equation}\label{lab1}
\lambda^{\gamma q_2}
b ( \lceil \Lambda' \boldsymbol{t} \rceil - \lceil B^{-1} \Lambda \boldsymbol{u} \rceil ) \to
a_\infty( \tilde B_{22} \boldsymbol{t} - \boldsymbol{u})
\end{equation}
for $\boldsymbol{t}, \boldsymbol{u} \in \R^2$ such that  $\tilde B_{22} \boldsymbol{t}-\boldsymbol{u} \neq \boldsymbol{0}$.
Therefore, $\tilde g_\lambda(\mbu) \to \tilde h_{22}(\mbu)$ 
for all $\mbu \in \R^2$.
This point-wise convergence can be extended to that in $L^2(\R^2)$ by applying Pratt's lemma,
c.f.\ (\cite{pils2017}, proof of Theorem 3.2), to the domination $|\tilde g_\lambda(\boldsymbol{u})| \le C \tilde G_\lambda(\boldsymbol{u}) $, where
\begin{align} \label{tildeG}
\tilde G_\lambda(\boldsymbol{u}) &:= \int_{(\boldsymbol{0},\boldsymbol{x}]} \rho(\Lambda^{-1}B\Lambda' \boldsymbol{t} - \boldsymbol{u})^{-1} \d \boldsymbol{t}
\to \int_{(\boldsymbol{0},\boldsymbol{x}]} \rho(\tilde B_{22} \boldsymbol{t}-\boldsymbol{u})^{-1} \d \boldsymbol{t}
=: \tilde G(\boldsymbol{u}),
\end{align}
for all $\mbu \in \R^2$. To get this domination we use
$| b(\boldsymbol{s}) | \le C \max\{ \rho(B\boldsymbol{s}), 1 \}^{-1}$, $\boldsymbol{s} \in \Z^2$, and by \eqref{rhoineq2} and $\rho (\Lambda^{-1} \mbt) = \lambda^{-\gamma q_2} \rho(\mbt)$, we further see that
\begin{align*}
&\rho ( \Lambda^{-1} B \Lambda' \boldsymbol{t} - \boldsymbol{u} )\nn \\
&\quad \le C\big\{ \rho \big( \Lambda^{-1} B (\lceil \Lambda' \boldsymbol{t} \rceil -
\lceil B^{-1} \Lambda \boldsymbol{u} \rceil ) \big) + \rho \big(\Lambda^{-1} B ( \Lambda' \boldsymbol{t} -
B^{-1} \Lambda \boldsymbol{u} - \lceil \Lambda' \boldsymbol{t} \rceil + \lceil B^{-1} \Lambda \boldsymbol{u} \rceil ) \big)\big\}\nn\\
&\quad \le C \lambda^{-\gamma q_2} \max \big\{ \rho \big( B (\lceil \Lambda' \boldsymbol{t} \rceil - \lceil B^{-1} \Lambda \boldsymbol{u} \rceil ) \big), 1 \big\},\label{ineq}
\end{align*}
where $C$ does not depend on $\boldsymbol{t},\boldsymbol{u} \in \R^2$. Then in view of the domination
$|\tilde g_\lambda(\boldsymbol{u})| \le C \tilde G_\lambda(\boldsymbol{u})$, $\mbu \in \R^2$, and
\eqref{tildeG}, the $L^2$-convergence in \eqref{L2conv} follows by Pratt's lemma from the convergence of norms
$\| \tilde G_\lambda \|^2 = \int_{(\boldsymbol{0},\boldsymbol{x}]\times
(\boldsymbol{0},\boldsymbol{x}]} (\rho^{-1} \star \rho^{-1} ) ( \Lambda^{-1}B\Lambda'(\boldsymbol{t}-\boldsymbol{s})) \d \boldsymbol{t} \d \boldsymbol{s}   \to \| \tilde G\|^2 $.
Indeed, after such a change of variables in the last integral that $b_{22} (t_2-s_2) = b_{22} (t'_2-s'_2)-\lambda^{1-\gamma} b_{21} (t_1-s_1)$
we see that
\begin{eqnarray} 
\| \tilde G_\lambda \|^2 &=& \int_{\R^4} (\rho^{-1} \star \rho^{-1} ) \big( \lambda^{1-\gamma \frac{q_2}{q_1} } \frac{\operatorname{det}(B)}{b_{22}} (t_1-s_1) + \lambda^{\gamma(1-\frac{q_2}{q_1})} b_{12} (t'_2-s'_2), b_{22}(t'_2-s'_2) \big)\nn \\
&&\qquad \times \1 \big(t_1\in (0,x_1], \, -\lambda^{1-\gamma} \frac{b_{21}}{b_{22}} t_1+ t'_2 \in (0,x_2]  \big)\nn \\
&&\qquad \times \1 \big(s_1\in (0,x_1], \, -\lambda^{1-\gamma}
\frac{b_{21}}{b_{22}} s_1+ s'_2 \in (0,x_2]  \big) \d t_1 \d t'_2 \d s_1 \d s'_2\nn \\
&\to&  \int_{(\boldsymbol{0},\boldsymbol{x}] \times (\boldsymbol{0},\boldsymbol{x}]} (\rho^{-1} \star \rho^{-1}) ( 0,b_{22}(t'_2-s'_2) )
\d t_1 \d t'_2 \d s_1 \d s'_2 = \| \tilde G \|^2 \label{Gnorm}
\end{eqnarray}
by the dominated convergence theorem
using the continuity of $(\rho^{-1} \star \rho^{-1})(\boldsymbol{t})$  and
$(\rho^{-1} \star \rho^{-1}) (\boldsymbol{t}) \le
C|t_2|^{-\tilde q_2}$ for $t_2 \neq 0$ with $\tilde q_2 = q_2 (2-Q)<1$, see  \eqref{def:conv}, \eqref{tilderho}.

\smallskip

\noi \underline{Case (i) and $\gamma = 1$ or $V^X_0 = \tilde V_{20}$.} Set $H(\gamma ) =  \frac{5}{2} + \frac{q_2}{2q_1} - q_2 =
1 + \tilde H_2 $. The proof is  similar to that in the case (i), $\gamma > 1 $ above.
We use Proposition \ref{prop} with $A = B^{-1}$, $\Lambda' =  \operatorname{diag}(\lambda, \lambda)$ and
$\Lambda = \operatorname{diag}(\lambda^{\frac{q_2}{q_1}}, \lambda)$. Accordingly, we need to prove $\|\tilde g_\lambda - \tilde h_{20}\| \to 0$, where
$$
\tilde h_{20} (\boldsymbol{u}) := 
\int_{(\boldsymbol{0},\boldsymbol{x}]} a_\infty (\tilde B_{20} \boldsymbol{t} - \boldsymbol{u}) \d \boldsymbol{t}, \quad \boldsymbol{u} \in \R^2,
$$
and $\tilde g_\lambda $ is defined as in \eqref{tildeg} with $\gamma =1 $. Note that now \eqref{point}  must be replaced by
\begin{equation*}
\Lambda^{-1} B (\lceil \Lambda' \boldsymbol{t} \rceil - \lceil B^{-1} \Lambda \boldsymbol{u} \rceil) \to \tilde B_{20} \boldsymbol{t} - \boldsymbol{u},
\end{equation*}
leading to
\begin{equation*}
\lambda^{q_2}
b ( \lceil \Lambda' \boldsymbol{t} \rceil - \lceil B^{-1} \Lambda \boldsymbol{u} \rceil ) \to
a_\infty( \tilde B_{20} \boldsymbol{t} - \boldsymbol{u})
\end{equation*}
for $\tilde B_{20} \boldsymbol{t} - \boldsymbol{u} \neq \boldsymbol{0}$.
Therefore, $\tilde g_\lambda$ converges to $\tilde h_{20}$ point-wise.  To prove the $L^2$-convergence
use Pratt's lemma as in the case $\gamma > 1 $ above, with $\tilde G_\lambda$, $\tilde G $ defined
as in \eqref{tildeG} with $\gamma =1 $ and $\tilde B_{22}$ replaced by $\tilde B_{20}$. Then
$\tilde G_\lambda(\boldsymbol{u}) \to \tilde G(\boldsymbol{u})$ for all $\mbu \in \R^2$ as in \eqref{tildeG} and
\begin{align*}
\| \tilde G_\lambda \|^2 &= \int_{(\boldsymbol{0}, \boldsymbol{x}] \times (\boldsymbol{0},\boldsymbol{x}]} (\rho^{-1} \star \rho^{-1}) ( \Lambda^{-1} B \Lambda' (\boldsymbol{t}-\boldsymbol{s}) ) \d \boldsymbol{t} \d \boldsymbol{s}\\
&=\int_{(\boldsymbol{0}, \boldsymbol{x}]\times(\boldsymbol{0},\boldsymbol{x}]} (\rho^{-1} \star \rho^{-1}) \big( \lambda^{1-\frac{q_2}{q_1}} \big( b_{11}(t_1-s_1) + b_{12} (t_2-s_2) \big), b_{21}(t_1-s_1)+b_{22}(t_2-s_2) \big) \d \boldsymbol{t} \d \boldsymbol{s}\\
&\to \int_{(\boldsymbol{0}, \boldsymbol{x}]\times(\boldsymbol{0},\boldsymbol{x}]} (\rho^{-1} \star \rho^{-1}) (\tilde B_{20}(\boldsymbol{t}-\boldsymbol{s})) \d \boldsymbol{t} \d \boldsymbol{s} = \| \tilde G \|^2
\end{align*}
follows similarly to \eqref{Gnorm}.

\smallskip

\noi \underline{Case (i) and $\gamma < 1$ or $V^X_- = \tilde V_{21}$.}
Set $H(\gamma ) = \gamma + \frac{3}{2} + \frac{q_2}{2q_1} - q_2 = \gamma + \tilde H_2$. The proof
proceeds similarly as above with $A = B^{-1}$, $\Lambda' =  \operatorname{diag}(\lambda, \lambda^\gamma)$
and $\Lambda = \operatorname{diag}(\lambda^{\frac{q_2}{q_1}}, \lambda)$. Then
\begin{align*}
\tilde g_\lambda(\boldsymbol{u})
&=
\int_{\R^2}  \lambda^{q_2}
b ( \lceil \Lambda' \boldsymbol{t} \rceil - \lceil B^{-1}  \Lambda \boldsymbol{u} \rceil ) \1 (
\Lambda' \boldsymbol{t} \in
(\boldsymbol{0}, \lfloor \Lambda' \boldsymbol{x} \rfloor]
) \d \boldsymbol{t} \\
&\to 
\int_{(\boldsymbol{0},\boldsymbol{x}]} a_\infty(\tilde B_{21} \boldsymbol{t}-\boldsymbol{u}) \d \boldsymbol{t} =: \tilde h_{21} (\boldsymbol{u}), \quad \mbu \in \R^2,
\end{align*}
in view of $\Lambda^{-1} B (\lceil \Lambda' \boldsymbol{t} \rceil - \lceil B^{-1} \Lambda \boldsymbol{u} \rceil) \to
\tilde B_{21} \boldsymbol{t} - \boldsymbol{u} $ and
$\lambda^{q_2}
b (\lceil \Lambda' \boldsymbol{t} \rceil - \lceil B^{-1} \Lambda \boldsymbol{u} \rceil ) \to a_\infty(\tilde B_{21} \boldsymbol{t} - \boldsymbol{u})$ for
$\tilde B_{21}\boldsymbol{t}-\boldsymbol{u} \neq \boldsymbol{0}$.  The proof of $\|\tilde g_\lambda - \tilde h_{21}\| \to 0$
using Pratt's lemma also follows similarly as above, with
$\tilde G_\lambda(\boldsymbol{u}) := \int_{(\boldsymbol{0},\boldsymbol{x}]} \rho(\Lambda^{-1}B\Lambda' \boldsymbol{t} - \boldsymbol{u})^{-1} \d \boldsymbol{t}
\to \int_{(\boldsymbol{0},\boldsymbol{x}]} \rho(\tilde B_{21} \boldsymbol{t}-\boldsymbol{u})^{-1} \d \boldsymbol{t}
=: \tilde G(\boldsymbol{u})$ for all $\mbu \in \R^2$ and
\begin{align*}
\| \tilde G_\lambda \|^2 &= \int_{\R^4} ( \rho^{-1} \star \rho^{-1} ) \big( \lambda^{1-\frac{q_2}{q_1}} b_{11} (t'_1-s'_1) - \lambda^{\gamma-\frac{q_2}{q_1}}  \frac{\operatorname{det}(B)}{b_{21}} (t_2-s_2), b_{21} (t'_1-s'_1) \big)\\
&\qquad \times \1 \big( t'_1-\lambda^{\gamma-1} \frac{b_{22}}{b_{21}}t_2 \in (0,x_1], \, t_2 \in (0,x_2] \big)\\
&\qquad \times \1 \big( s'_1-\lambda^{\gamma-1} \frac{b_{22}}{b_{21}}t_2 \in (0,x_1], \, s_2 \in (0,x_2] \big) \d t'_1 \d t_2 \d s'_1 \d s_2 \\
&\to \int_{(\boldsymbol{0},\boldsymbol{x}] \times (\boldsymbol{0},\boldsymbol{x}]} ( \rho^{-1} \star \rho^{-1} )(0,b_{21}(t'_1-s'_1)) \d t'_1 \d t_2 \d s'_1 \d s_2= \| \tilde G \|^2
\end{align*}
as in \eqref{Gnorm}.

\smallskip

\noi \underline{Case (ii) and $\gamma > \frac{q_1}{q_2}$ or $V^X_+ = \tilde V_{22}$.} Set $H(\gamma) = 1 + \gamma \tilde H_2$.
The proof of $\|\tilde g_\lambda - \tilde h_{22}\| \to 0$ is completely analogous to that
in Case (i), $\ga > 1 $, with the same $\tilde g_\la$, $\Lambda'$,
$\Lambda$, $\tilde h_{22}$ as in \eqref{tildeg}, \eqref{h22}  using the fact that
\begin{align*}
\Lambda^{-1} B \Lambda' =
\left[\begin{array}{rr}
\lambda^{1-\gamma \frac{q_2}{q_1}}b_{11}&\lambda^{\gamma(1-\frac{q_2}{q_1})}b_{12}\\
0&b_{22}
\end{array}\right] \to \tilde B_{22}.
\end{align*}

\smallskip

\noi \underline{Case (ii) and $\gamma = \frac{q_1}{q_2}$ or $V^X_0 = \tilde V_{00}$.} Set $H(\gamma ) =  \frac{3}{2}  + \frac{3q_1}{2q_2} - q_1
= \tilde H_1+\frac{q_1}{q_2}=1+\frac{q_1}{q_2}\tilde H_2, $
$A = B^{-1}$, $\Lambda = \Lambda' = \operatorname{diag}(\lambda,\allowbreak \lambda^{\frac{q_1}{q_2}})$.
The proof of $\|\tilde g_\lambda - \tilde h_{00}\| \to 0$ with
$\tilde h_{00}(\boldsymbol{u}) := 
\int_{(\boldsymbol{0},\boldsymbol{x}]} a_\infty (\tilde B_{00}\boldsymbol{t}-\boldsymbol{u}) \d \boldsymbol{t}$, $\mbu \in \R^2$,
follows similar lines as in the other cases. The point-wise convergence $\tilde g_\lambda \to  \tilde h_{00}$ uses
$\Lambda^{-1}B(\lceil \Lambda' \boldsymbol{t} \rceil- \lceil B^{-1} \Lambda \boldsymbol{u} \rceil) \to
\tilde B_{00}\boldsymbol{t} - \boldsymbol{u} $ and
$\lambda^{q_1}
	b (\lceil \Lambda' \boldsymbol{t} \rceil - \lceil B^{-1} \Lambda \boldsymbol{u} \rceil ) \to
a_\infty(\tilde B_{00}\boldsymbol{t} - \boldsymbol{u}) $ for
$\tilde B_{00}\boldsymbol{t} - \boldsymbol{u} \neq \boldsymbol{0}$.
The $L^2$-convergence can be verified using Pratt's lemma with the dominating function
$\tilde G_\lambda (\boldsymbol{u}) :=\int_{(\boldsymbol{0},\boldsymbol{x}]} \rho ( \Lambda^{-1} B \Lambda' \boldsymbol{t} - \boldsymbol{u} )^{-1} \d \boldsymbol{t} \to \int_{(\boldsymbol{0},\boldsymbol{x}]} \rho(\tilde B_{00}\boldsymbol{t} - \boldsymbol{u} )^{-1} \d \boldsymbol{t} =: \tilde G(\boldsymbol{u})$, $\mbu \in \R^2$,
satisfying
\begin{align*}
	\| \tilde G_\lambda \|^2 &=\int_{\R^4} (\rho^{-1} \star \rho^{-1}) \big( b_{11} (t'_1-s'_1), b_{22} (t_2-s_2) \big) \\
	&\qquad \times \1 \big( t'_1 - \lambda^{\frac{q_1}{q_2}-1} \frac{b_{12}}{b_{11}} t_2 \in (0,x_1], \, t_2 \in (0,x_2] \big)\\
	&\qquad \times \1 \big( s'_1 - \lambda^{\frac{q_1}{q_2}-1} \frac{b_{12}}{b_{11}} s_2 \in (0,x_1], \, s_2 \in (0,x_2] \big)\d t_1 \d t_2 \d s_1 \d s_2\\
	&\to \int_{(\boldsymbol{0},\boldsymbol{x}] \times (\boldsymbol{0},\boldsymbol{x}]} (\rho^{-1} \star \rho^{-1}) (b_{11}(t'_1-s'_1), b_{22}(t_2-s_2)) \d t'_1 \d t_2 \d s'_1 \d s_2 = \|\tilde G \|^2,
	\end{align*}
which follows from the dominated convergence theorem using $(\rho^{-1} \star \rho^{-1}) (\boldsymbol{t}) \le C \tilde \rho(\boldsymbol{t})^{-1}$, $\mbt \in \R^2_0$,
and the (local) integrability of the function $\tilde \rho^{-1}$ with $\frac{1}{\tilde q_1} + \frac{1}{\tilde q_2} =
\frac{Q}{2-Q} >1$, see
\eqref{def:conv}, \eqref{tilderho} and \eqref{intrho}.

We note that the above proof applies for all $q_1 <q_2 $ satisfying  $1 < Q < 2$, hence also in Cases (ii), $V^X_0 = \tilde V_{00}$ of Theorems
\ref{thmR12} and \ref{thmR22}.

\smallskip

\noi \underline{Case (ii) and $\gamma < \frac{q_1}{q_2}$ or $V^X_- = \tilde V_{11}$.}
 Set $H(\gamma ) =  \ga +\frac{3}{2}  +\frac{q_1}{2q_2} - q_1 = \ga + \tilde H_1$, $A = B^{-1}$, $\Lambda  = \operatorname{diag}(\la, \la^{\frac{q_1}{q_2}})$, $\Lambda'  = \operatorname{diag}(\la, \la^{\ga})$. Then
$\Lambda^{-1} B (\lceil \Lambda' \boldsymbol{t} \rceil - \lceil B^{-1} \Lambda \boldsymbol{u} \rceil) \to
\tilde B_{11}\boldsymbol{t} - \boldsymbol{u} $ and
$\lambda^{q_1}
b ( \lceil \Lambda' \boldsymbol{t}\rceil - \lceil B^{-1} \Lambda \boldsymbol{u} \rceil ) \to a_\infty(\tilde B_{11}\boldsymbol{t} - \boldsymbol{u})$ for
$\tilde B_{11} \boldsymbol{t}-\boldsymbol{u} \neq \boldsymbol{0}$. This leads to
$\tilde g_\lambda (\mbu) \to
h_{11}(\boldsymbol{u}) := 
\int_{(\boldsymbol{0},\boldsymbol{x}]} a_\infty(\tilde B_{11} \boldsymbol{t}-\boldsymbol{u}) \d \boldsymbol{t}$ for all $\mbu \in \R^2 $.  The required convergence $\|\tilde g_\la - \tilde h_{11}\| \to 0$ follows similarly as in the other cases
using $\tilde G_\lambda (\boldsymbol{u}) := \int_{(\boldsymbol{0},\boldsymbol{x}]} \rho(\Lambda^{-1} B \Lambda' \boldsymbol{t} - \boldsymbol{u})^{-1} \d \boldsymbol{t} \to \int_{(\boldsymbol{0},\boldsymbol{x}]} \rho(\tilde B_{11} \boldsymbol{t}-\boldsymbol{u})^{-1} \d \boldsymbol{t} =: \tilde G(\boldsymbol{u})
$ for all $\boldsymbol{u} \in \R^2$ and
\begin{eqnarray*}
\|\tilde G_\lambda \|^2
&=& \int_{\R^4} (\rho^{-1} \star \rho^{-1}) \big(b_{11} (t'_1-s'_1),
\lambda^{\gamma-\frac{q_1}{q_2}} b_{22} (t_2-s_2) \big)\\
&&\qquad \times \1 \big( t'_1 - \lambda^{\gamma-1} \frac{b_{12}}{b_{11}} t_2 \in (0,x_1], \, t_2 \in (0,x_2] \big)\\
&&\qquad \times \1 \big( s'_1 - \lambda^{\gamma-1} \frac{b_{12}}{b_{11}} s_2 \in (0,x_1], \, s_2 \in (0,x_2] \big)
\d t^\prime_1 \d t_2 \d s'_1 \d s_2 \\
&\to&\int_{(\boldsymbol{0},\boldsymbol{x}] \times (\boldsymbol{0}, \boldsymbol{x}]}
(\rho^{-1} \star \rho^{-1}) (b_{11}(t^\prime_1-s^\prime_1), 0) \d t^\prime_1 \d t_2 \d s^\prime_1 \d s_2 = \| \tilde G \|^2
\end{eqnarray*}
which follows from the dominated convergence theorem using $(\rho^{-1} \star \rho^{-1})(\boldsymbol{t}) \le C |t_1|^{- \tilde q_1}$ for all $t_1 \neq 0$ with $\tilde q_1 <1$.

Note that the above proof applies for all $q_1 <q_2 $ satisfying  $\tilde q_1 <1$ or $\tilde Q_2 > 1$
hence also in Case (ii), $V^X_- = \tilde V_{11}$ of Theorem
\ref{thmR12}.
Theorem \ref{thmR11} is proved.
\end{proof}

\begin{proof}[Proof of Theorem \ref{thmR12}.]
\underline{Case (i) and  $\gamma > 1$ or $V^X_+ = V_{11}$.} 
Set  $H(\gamma ) =  \frac{3}{2} + \frac{q_1}{q_2} - q_1 + \frac{\gamma}{2} = H_1 + \frac{\gamma}{2}$.
Rewrite $g_\lambda (\mbs)$ as
\begin{align} \label{gla}
g_\lambda (\boldsymbol{s})
&= \lambda^{-H(\gamma)} \int_{\R^2} b(\lceil \boldsymbol{t} \rceil) \1 ( \lceil \boldsymbol{t} \rceil + \mbs
\in (0, \lfloor \lambda x_1 \rfloor ] \times (0, \lfloor \lambda^\gamma x_2 \rfloor] ) \d \mbt \nn \\
&= \lambda^{1+\frac{q_1}{q_2}-H(\gamma)}
\int_{\R^2} b(\lceil B^{-1}  \Lambda' \boldsymbol{t} \rceil ) \1 ( \lceil B^{-1} \Lambda' \boldsymbol{t} \rceil+ \boldsymbol{s} \in (0, \lfloor \lambda x_1 \rfloor ] \times (0, \lfloor \lambda^\gamma x_2 \rfloor] ) \d \boldsymbol{t}, \quad \mbs \in \Z^2,
\end{align}
where $\Lambda' := \operatorname{diag} (\la, \la^{\frac{q_1}{q_2}})$. We use Proposition \ref{prop} with
\begin{eqnarray} \label{AL}
&A := \left[
\begin{array}{cc}
1&0\\
- \frac{b_{21}}{b_{22}}&1
\end{array} \right], \quad
\Lambda := 
\left[
\begin{array}{ll}
\lambda&0\\
0&\lambda^\gamma
\end{array} \right].
\end{eqnarray}
Note $\Lambda^{-1} B^{-1} \Lambda' \to B_{11}$, $\Lambda^{-1} A  \Lambda \to I$ since $\gamma > 1 $.
Then, according to the definition \eqref{def:tildegn},
$\tilde g_\lambda (\boldsymbol{u}) = \int_{\R^2} \tilde b_\lambda (\boldsymbol{u}, \boldsymbol{t}) \d \boldsymbol{t},$
where
\begin{align}\label{tb11}
\tilde b_\lambda (\boldsymbol{u},\boldsymbol{t}) := \lambda^{q_1} b(\lceil B^{-1} \Lambda' \boldsymbol{t} \rceil) \1 ( \lceil B^{-1}  \Lambda' \boldsymbol{t} \rceil + \lceil A \Lambda \boldsymbol{u} \rceil \in (\boldsymbol{0},\lfloor \Lambda \boldsymbol{x}\rfloor ] ) \to a_\infty (\boldsymbol{t}) \1 ( B_{11} \boldsymbol{t} + \boldsymbol{u} \in (\boldsymbol{0},\boldsymbol{x}] )
\end{align}
for all $\boldsymbol{u}, \boldsymbol{t} \in \R^2$ such that $\boldsymbol{t} \neq \boldsymbol{0}$, $b_{22} t_1 + u_1 \not \in \{0, x_1\}$, $u_2 \not \in \{0, x_2\}$. It suffices to prove that the following point-wise convergence also holds
\begin{equation}\label{th11}
\tilde g_\lambda (\boldsymbol{u}) \to h_{11} (\boldsymbol{u}) := \int_{\R^2} a_\infty (\boldsymbol{t}) \1 (B_{11} \boldsymbol{t} + \boldsymbol{u} \in (\boldsymbol{0},\boldsymbol{x}] ) \d \boldsymbol{t} \quad \text{in } L^2(\R^2).
\end{equation}
To show \eqref{th11}, decompose $\tilde g_{\lambda}(\boldsymbol{u}) =  \tilde g_{\lambda,0}(\boldsymbol{u}) + \tilde g_{\lambda,1}(\boldsymbol{u})$ with $\tilde g_{\lambda,j}(\boldsymbol{u}) = \int_{\R^2} \tilde b_{\lambda,j}(\boldsymbol{u}, \boldsymbol{t}) \d \boldsymbol{t}$ given by
\begin{align*}
\tilde b_{\lambda,0} (\boldsymbol{u},\boldsymbol{t}) := \tilde b_{\lambda} (\boldsymbol{u},\boldsymbol{t}) \1 \big( | t_2 | \ge \lambda^{1-\frac{q_1}{q_2}} \big), \quad
\tilde b_{\lambda,1} (\boldsymbol{u},\boldsymbol{t}) := \tilde b_{\lambda} (\boldsymbol{u},\boldsymbol{t}) \1 \big( | t_2 | < \lambda^{1-\frac{q_1}{q_2}} \big),  \quad \boldsymbol{u}, \boldsymbol{t} \in \R^2.
\end{align*}
Then \eqref{th11} follows from
\begin{equation}\label{th112}
\| \tilde g_{\lambda,1} - h_{11} \| \to 0 \quad \text{and} \quad  \| \tilde g_{\lambda,0} \| \to 0.
\end{equation}
The first relation in \eqref{th112} follows from \eqref{tb11} and the dominated convergence theorem, as follows. To justify the domination, combine   $|b(\boldsymbol{s})| \le C \max \{\rho(B\boldsymbol{s}),1 \}^{-1}$, $\boldsymbol{s} \in \Z^2$, and
$\rho(\boldsymbol{t}) \le C \lambda^{- q_1} \max \{ \rho ( B \lceil B^{-1} \Lambda' \boldsymbol{t} \rceil ), 1 \}$, $\boldsymbol{t} \in \R^2$,
to get $\lambda^{q_1} |b(\lceil B^{-1} \Lambda' \boldsymbol{t} \rceil)| \le C \rho(\mbt)^{-1}$, $\mbt \in \R^2$, $\lambda > 1$.
Also note  that
\begin{align*}
&\1 \big( |t_2| < \lambda^{1-\frac{q_1}{q_2}}, \, \lceil B^{-1}  \Lambda' \boldsymbol{t} \rceil + \lceil A \Lambda \boldsymbol{u} \rceil \in (\boldsymbol{0},\lfloor \Lambda \boldsymbol{x}\rfloor ] \big)\\
&\qquad\le \1 \big( |t_2| < \lambda^{1-\frac{q_1}{q_2}}, \,
B^{-1}  \Lambda' \boldsymbol{t} + A \Lambda \boldsymbol{u} \in (-{\bf 2}, \Lambda \boldsymbol{x}]\big) \\
&\qquad \le \1 \big( |t_2| < \lambda^{1-\frac{q_1}{q_2}}, \,
b_{22} t_1 - \lambda^{\frac{q_1}{q_2}-1}  b_{12} t_2 + u_1 \in (-\frac{2}{\lambda}, x_1 ], \\
&\qquad \qquad - \lambda^{1-\gamma} \frac{b_{21}}{b_{22}} ( b_{22} t_1 - \lambda^{\frac{q_1}{q_2}-1} b_{12} t_2 + u_1) + \lambda^{\frac{q_1}{q_2}-\gamma} \frac{\operatorname{det}(B)}{b_{22}} t_2 + u_2 \in (-\frac{2}{\lambda^\gamma}, x_2] \big)\\
&\qquad \le \1 \big( | b_{22} t_1 + u_1 | \le C_1, \, |u_2| \le C_2 \big)
\end{align*}
for  some $C_1, C_2 >0$  independent of $\boldsymbol{t}, \boldsymbol{u} \in \R^2 $ and $\lambda >0$. Thus, $|\tilde g_{\lambda,1}(\mbu)| \le \bar g (\mbu)$, where the dominating function $\bar g(\mbu) :=  C \1(|u_2| \le C_2) \int_{\R^2} \rho(\mbt)^{-1} \1( | b_{22} t_1 + u_1 | \le C_1) \d \mbt
\le  C \1(|u_2| \le C_2) \int_{\R} |t_1|^{\frac{q_1}{q_2}- q_1} \1( | b_{22} t_1 + u_1 | \le C_1) \d t_1 $, $\mbu \in \R^2$, satisfies $\|\bar g\| <\infty$ since $\tilde Q_1 < 1$, proving the first relation in \eqref{th112}. The second relation in \eqref{th112} follows by
Minkowski's inequality:
\begin{align*}
\| \tilde g_{\lambda,0} \| &\le C \int_{\R^2} \Big( \int_{\R^2} \rho(t_1-\frac{1}{b_{22}} u_1,t_2)^{-2} \1 \big( | t_2| \ge \lambda^{1-\frac{q_1}{q_2}} \big)\\
&\qquad \times \1 \big( | b_{22}t_1- \lambda^{\frac{q_1}{q_2}-1}  b_{12} t_2 | \le C_1, \, | \lambda^{\frac{q_1}{q_2}-\gamma} \frac{\operatorname{det}(B)}{b_{22}} t_2 + u_2 | \le C_2 \big) \d \boldsymbol{u} \Big)^{\frac{1}{2}} \d \boldsymbol{t}\\
&= C \int_{\R^2} \Big( \int_{\R} \rho(u_1,t_2)^{-2} \1 \big( | t_2 | \ge \lambda^{1-\frac{q_1}{q_2}}, \, | b_{22} t_1 - \lambda^{\frac{q_1}{q_2}-1} b_{12} t_2 | \le C_1 \big) \d u_1 \Big)^{\frac{1}{2}} \d \boldsymbol{t}\\
&= C \int_{\R^2} |t_2|^{q_2(\frac{1}{2q_1}-1)} \1 \big( | t_2 | \ge \lambda^{1-\frac{q_1}{q_2}}, \, | b_{22} t_1 - \lambda^{\frac{q_1}{q_2}-1} b_{12} t_2 | \le C_1 \big) \d \boldsymbol{t} \\
&= C\int_{\R} |t_2|^{q_2(\frac{1}{2q_1}-1)} \1 \big( | t_2 | \ge \lambda^{1-\frac{q_1}{q_2}}\big) \d t_2 = o(1)
\end{align*}
since $\frac{1}{2q_1} < \tilde Q_1 < 1$. This proves \eqref{th112} and  \eqref{th11}.

We note that the above argument  applies to the proof of the limit $V^X_+ = V_{11}$ in both
Cases (i) and (ii)  of Theorems \ref{thmR12} and  \ref{thmR22}
as well,  including Case (ii) and $\frac{q_1}{q_2} < \gamma \le 1 $, with the difference
that in the latter case we make the change of variable $\mbt \to B^{-1} \Lambda' \mbt $ in \eqref{gla} with
$\Lambda'  :=   \operatorname{diag}(\lambda^\gamma,\lambda^{\gamma \frac{q_1}{q_2}})$.

\smallskip

\noi \underline{Case (i) and  $\gamma = 1$ or $V^X_+ = V_{01}$.} Set $H(\gamma) = H_1 + \frac{1}{2}$.
The proof proceeds as in Case (i), $\gamma > 1 $ above, by writing
$\tilde g_\lambda (\boldsymbol{u}) = \int_{\R^2} \tilde b_\lambda (\boldsymbol{u}, \boldsymbol{t}) \d \boldsymbol{t}$ with $A$ as in \eqref{AL}, $\Lambda := \operatorname{diag}(\lambda,\lambda)$,  $\Lambda' := \operatorname{diag}(\lambda,\lambda^{\frac{q_1}{q_2}})$ in
$$
\tilde b_\lambda (\boldsymbol{u},\boldsymbol{t}) := \lambda^{q_1} b(\lceil B^{-1} \Lambda' \boldsymbol{t} \rceil) \1 ( \lceil B^{-1}  \Lambda' \boldsymbol{t} \rceil + \lceil \Lambda \boldsymbol{u} \rceil \in (\boldsymbol{0},\lfloor \Lambda \boldsymbol{x}\rfloor ] ) \to a_\infty (\boldsymbol{t}) \1 ( B_{01} \boldsymbol{t} + \boldsymbol{u} \in (\boldsymbol{0},\boldsymbol{x}] )
$$
for all $\mbu, \mbt \in \R^2$ such that $\mbt \neq \boldsymbol{0}$, $b_{22} t_1 + u_1 \not \in \{0, x_1 \}$, $-b_{21} t_1 + u_2 \not \in \{0, x_2\}$, c.f.\ \eqref{tb11}.
The details of the convergence $\tilde g_\lambda (\boldsymbol{u}) \to h_{01} (\boldsymbol{u}) := \int_{\R^2} a_\infty (\boldsymbol{t}) \1 \big(B_{01} \boldsymbol{t} + \boldsymbol{u} \in (\boldsymbol{0},\boldsymbol{x}] \big) \d \boldsymbol{t} $, $\mbu \in \R^2$, in $L^2(\R^2)$
are similar as above and
omitted.

We note that the above argument  applies to the proof of $V^X_0 = V_{01}$ in
Case (i) of Theorem \ref{thmR22}.

\smallskip

\noi \underline{Case (i) and  $\gamma < 1$ or $V^X_- = V_{21}$.}
Set $H(\gamma ) =  \frac{1}{2} +  \gamma (\frac{3}{2} +  \frac{q_1}{q_2} - q_1) = \frac{1}{2} + \gamma  H_1 $.
Use Proposition \ref{prop} with
\begin{eqnarray*}
&A := \left[
\begin{array}{cc}
1&- \frac{b_{22}}{b_{21}} \\
0&1
\end{array} \right], \quad
\Lambda := 
\left[
\begin{array}{ll}
\lambda&0\\
0&\lambda^\gamma
\end{array} \right].
\end{eqnarray*}
Similarly to \eqref{tb11},
$\tilde g_\lambda (\boldsymbol{u}) = \int_{\R^2} \tilde b_\lambda (\boldsymbol{u}, \boldsymbol{t}) \d \boldsymbol{t}$,
where
$$
\tilde b_\lambda (\boldsymbol{u},\boldsymbol{t}) := \lambda^{\gamma q_1} b(\lceil B^{-1} \Lambda' \boldsymbol{t} \rceil) \1 ( \lceil B^{-1}  \Lambda' \boldsymbol{t} \rceil + \lceil \Lambda \boldsymbol{u} \rceil \in (\boldsymbol{0},\lfloor \Lambda \boldsymbol{x}\rfloor ] ) \to a_\infty (\boldsymbol{t}) \1 (B_{21} \boldsymbol{t} + \boldsymbol{u} \in (\boldsymbol{0},\boldsymbol{x}] )
$$
for all $\boldsymbol{u}, \boldsymbol{t} \in \R^2$ such that $\boldsymbol{t} \neq \boldsymbol{0}$, $u_1 \not \in \{0, x_1\}$, $-b_{21} t_1 + u_2 \not \in \{0, x_2\}$. It suffices to prove
$\tilde g_\lambda (\boldsymbol{u}) \ \to \ h_{21} (\boldsymbol{u}) := \int_{\R^2} a_\infty (\boldsymbol{t})
\1 \big(B_{21} \boldsymbol{t} + \boldsymbol{u} \in (\boldsymbol{0},\boldsymbol{x}] \big) \d \boldsymbol{t}$ in $L^2(\R^2)$
or
\begin{equation}\label{th212}
\| \tilde g_{\lambda,1} - h_{21} \| \to 0  \quad \text{and} \quad \| \tilde g_{\lambda,0} \| \to 0,
\end{equation}
where  $\tilde g_{\lambda,j}(\boldsymbol{u}) = \int_{\R^2} \tilde b_{\lambda,j}(\boldsymbol{u}, \boldsymbol{t}) \d \boldsymbol{t}$ \ and
\begin{align*}
\tilde b_{\lambda,0} (\boldsymbol{u},\boldsymbol{t}) := \tilde b_{\lambda} (\boldsymbol{u},\boldsymbol{t}) \1 \big( | t_2 | \ge \lambda^{\gamma(1-\frac{q_1}{q_2})} \big), \quad
\tilde b_{\lambda,1} (\boldsymbol{u},\boldsymbol{t}) := \tilde b_{\lambda} (\boldsymbol{u},\boldsymbol{t}) \1 \big( | t_2 | < \lambda^{\gamma(1-\frac{q_1}{q_2})} \big), \quad \boldsymbol{u}, \boldsymbol{t} \in \R^2.
\end{align*}
The proof of \eqref{th212} is similar to that of \eqref{th112} and omitted. This proves Case (i).
The above proofs also included Case (ii) of  Theorem \ref{thmR12}.
Theorem \ref{thmR12} is proved.
\end{proof}

\begin{proof}[Proof of Theorem \ref{thmR22}.]
\underline{Case (ii) and  $\gamma < \frac{q_1}{q_2}$ or $V^X_- =  V_{22}$.}
Set $H(\gamma ) =  \frac{1}{2} +  \gamma (\frac{3}{2} +  \frac{q_2}{q_1} - q_2) = \frac{1}{2} + \gamma  H_2 $.
The proof is similar to that in the case (i) of Theorem \ref{thmR12} using
\begin{eqnarray*}
&A := \left[
\begin{array}{cc}
1&- \frac{b_{12}}{b_{11}}\\
0&1
\end{array} \right], \quad
\Lambda := 
\left[
\begin{array}{ll}
\lambda&0\\
0&\lambda^\gamma
\end{array} \right], \quad \Lambda' := 
\left[
\begin{array}{cc}
\lambda^{\gamma \frac{q_2}{q_1}}&0\\
0&\lambda^\gamma
\end{array} \right].
\end{eqnarray*}
Accordingly,
$\tilde g_\lambda (\boldsymbol{u}) = \int_{\R^2} \tilde b_\lambda (\boldsymbol{u}, \boldsymbol{t}) \d \boldsymbol{t}$,
where
$$
\tilde b_\lambda (\boldsymbol{u},\boldsymbol{t}) := \lambda^{\gamma q_2}
b(\lceil B^{-1} \Lambda' \boldsymbol{t} \rceil) \1 ( \lceil B^{-1}  \Lambda' \boldsymbol{t} \rceil + \lceil \Lambda \boldsymbol{u} \rceil \in (\boldsymbol{0},\lfloor \Lambda \boldsymbol{x}\rfloor ] ) \to a_\infty (\boldsymbol{t}) \1 (B_{22} \boldsymbol{t} + \boldsymbol{u} \in (\boldsymbol{0},\boldsymbol{x}] )
$$
for all $\boldsymbol{u}, \boldsymbol{t} \in \R^2$ such that $\boldsymbol{t} \neq \boldsymbol{0}$, $u_1 \not \in \{0, x_1\}$, $b_{11} t_2 + u_2 \not \in \{0, x_2\}$. It suffices to prove
$\tilde g_\lambda (\boldsymbol{u}) \ \to \ h_{22} (\boldsymbol{u}) := \int_{\R^2} a_\infty (\boldsymbol{t})
\1 ( B_{22} \boldsymbol{t} + \boldsymbol{u} \in (\boldsymbol{0},\boldsymbol{x}] ) \d \boldsymbol{t}$ in $L^2(\R^2)$
or
\begin{equation}\label{th222}
\| \tilde g_{\lambda,1} - h_{22} \| \to 0  \quad \text{and} \quad \| \tilde g_{\lambda,0} \| \to 0,
\end{equation}
where  $\tilde g_{\lambda,j}(\boldsymbol{u}) = \int_{\R^2} \tilde b_{\lambda,j}(\boldsymbol{u}, \boldsymbol{t}) \d \boldsymbol{t}$ \ and
\begin{align*}
\tilde b_{\lambda,0} (\boldsymbol{u},\boldsymbol{t}) := \tilde b_{\lambda} (\boldsymbol{u},\boldsymbol{t}) \1 \big( | t_1 | \ge
\lambda^{\gamma \frac{q_2}{q_1} -1} \big), \quad
\tilde b_{\lambda,1} (\boldsymbol{u},\boldsymbol{t}) :=
\tilde b_{\lambda} (\boldsymbol{u},\boldsymbol{t}) \1 \big( | t_1 |
< \lambda^{\gamma \frac{q_2}{q_1}-1} \big), \quad \boldsymbol{u}, \boldsymbol{t} \in \R^2.
\end{align*}
The proof of \eqref{th222} using $\tilde Q_2 <1 $ is similar to that of \eqref{th112} and omitted.
The remaining cases of Theorem~\ref{thmR22} follow from Theorem \ref{thmR12}, thereby completing
the proof of Theorem~\ref{thmR22}.
\end{proof}

\begin{proof}[Proof of Theorem \ref{thmR1}.]
\underline{Case (i) and  $\gamma > 1$ or $V^X_+ = \tilde V_{02}$.} Set $H(\gamma) = 1 + \gamma \tilde H$. We follow the proof
of Theorem \ref{thmR11}, Case (i), $\gamma > 1$. Let $\Lambda := \operatorname{diag}(\lambda^\ga, \la^\ga)$, $\Lambda' :=  \operatorname{diag}(\lambda, \la^\ga)$. Then
\eqref{point} and \eqref{lab1} hold with $q_2 = q$ and $\tilde B_{22} $ replaced by $\tilde B_{02}$.
Then the convergence $\tilde g_\lambda(\mbu) \to \tilde h_{02}(\mbu) := \int_{\boldsymbol{0}, {\mbx}]} a_\infty (\tilde B_{02} \mbt - \mbu) \d \mbt $ in $L^2(\R^2)$
follows similarly by Pratt's lemma with  \eqref{Gnorm} replaced by
\begin{eqnarray*} 
\| \tilde G_\lambda \|^2 &=& \int_{\R^4} (\rho^{-1} \star \rho^{-1} ) \big( \lambda^{1-\gamma} \frac{\operatorname{det}(B)}{b_{22}} (t_1-s_1) + b_{12} (t'_2-s'_2), b_{22}(t'_2-s'_2) \big)\nn \\
&&\qquad \times \1 \big(t_1\in (0,x_1], \, -\lambda^{1-\gamma} \frac{b_{21}}{b_{22}} t_1+ t'_2 \in (0,x_2]  \big)\nn \\
&&\qquad \times \1 \big(s_1\in (0,x_1], \, -\lambda^{1-\gamma}
\frac{b_{21}}{b_{22}} s_1+ s'_2 \in (0,x_2]  \big) \d t_1 \d t'_2 \d s_1 \d s'_2\nn \\
&\to&  \int_{(\boldsymbol{0},\boldsymbol{x}] \times (\boldsymbol{0},\boldsymbol{x}]} (\rho^{-1} \star \rho^{-1}) (\tilde B_{22}(\mbt -\mbs) )
\d \mbt \d \mbs = \| \tilde G \|^2.  
\end{eqnarray*}
The proof in Case (i), $\gamma = 1$ and  $\gamma < 1$ is similar
to that of Theorem \ref{thmR11},  Case (i) 
and is omitted.

\smallskip

\noi \underline{Case (ii) and  $\gamma > 1$ or $V^X_+ =  V_{10}$.} Set $H(\gamma) = H + \frac{\gamma}{2}$.
We follow the proof of Theorem \ref{thmR12}, Case (i), $\gamma > 1$, with
$\Lambda' := \operatorname{diag} (\la, \la)$ and $A, \Lambda $ as in
\eqref{AL}. Then $\Lambda^{-1} B^{-1} \Lambda' \to B_{10}$ and the result follows
from $\|\tilde g_\lambda -  h_{10}\| \to 0 $, where
$h_{10}(\mbu) := \int_{\R^2} a_\infty (\mbt) \1( B_{10} \mbt + \mbu \in(\boldsymbol{0}, {\mbx}]) \d \mbt $.
Following the proof of \eqref{th11}, we decompose $\tilde g_{\lambda}(\boldsymbol{u}) =  \tilde g_{\lambda,0}(\boldsymbol{u}) + \tilde g_{\lambda,1}(\boldsymbol{u})$ with $\tilde g_{\lambda,j}(\boldsymbol{u}) = \int_{\R^2} \tilde b_{\lambda,j}(\boldsymbol{u}, \boldsymbol{t}) \d \boldsymbol{t}$ given by
\begin{equation*}
\tilde b_{\lambda,0} (\boldsymbol{u},\boldsymbol{t}) :=
\tilde b_{\lambda} (\boldsymbol{u},\boldsymbol{t}) \1 ( | t_2 | \ge \tilde C), \quad
\tilde b_{\lambda,1} (\boldsymbol{u},\boldsymbol{t}) := \tilde b_{\lambda} (\boldsymbol{u},\boldsymbol{t}) \1 ( | t_2 | < \tilde C ),
\end{equation*}
and $\tilde b_{\lambda} (\boldsymbol{u},\boldsymbol{t})$, $\mbu, \mbt \in \R^2$, as in \eqref{tb11},
where $\tilde C >0$ is a sufficiently large constant. It suffices to prove
\begin{equation} \label{limC}
\lim_{\tilde C \to \infty} \limsup_{\lambda \to \infty} \|\tilde g_{\lambda,1} - h_{10}\| = 0 \quad \text{and} \quad
\lim_{\tilde C \to \infty} \limsup_{\lambda \to \infty} \|\tilde g_{\lambda,0}\| = 0.
\end{equation}
The proof of \eqref{limC} mimics that of \eqref{th212} and we omit the details.
The remaining statements in Theorem~\ref{thmR1}, Case~(ii) also follow similarly to
the proof of Theorem~\ref{thmR12}, Case~(i). Theorem~\ref{thmR1} is proved.
\end{proof}

\section{Appendix}

\subsection{Generalized homogeneous functions}

Let $q_i >0$, $i=1,2$, with $Q := \frac{1}{q_1} + \frac{1}{q_2}$.

\begin{defn}  \label{defhomo} {\rm A measurable function $h: \R^2 \to \R$ is said to be:

\smallskip

\noi (i) {\it generalized homogeneous} if for all $\lambda >0$,
\begin{equation} \label{anishomo}
\lambda h(\lambda^{1/q_1}t_1, \lambda^{1/q_2} t_2) =  h({\mbt}), \quad {\mbt} \in \R^2_0. 
\end{equation}

\noi (ii) {\it generalized invariant}  if
$\lambda \mapsto h(\lambda^{1/q_1}t_1, \lambda^{1/q_2} t_2)$ is a constant function on $\R_+$ for
any ${\mbt} \in \R^2_0 $.
}

\end{defn}

\begin{prop} Any generalized homogeneous function $h$ can be represented as
\begin{equation}\label{hrho}
h(\mbt) = L(\mbt)/\rho(\mbt), \quad \mbt  \in \R^2_0,
\end{equation}
where $\rho(\mbt) = |t_1|^{q_1} +  |t_2|^{q_2}$, $\boldsymbol{t} \in \R^2$, and $L$ is a generalized invariant function. Moreover,
$L$ can be written as
\begin{equation} \label{Lpm}
L(\mbt) = L_{\operatorname{sign}(t_2)} ( t_1/\rho(\mbt)^{1/q_1} ), \quad t_2 \ne 0,
\end{equation}
where $L_\pm (z) := h(z, \pm (1 - |z|^{q_1})^{1/q_2})$, $z \in [-1,1]$.
\end{prop}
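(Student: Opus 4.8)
The plan is to define $L$ explicitly by $L(\mbt) := \rho(\mbt) h(\mbt)$ for $\mbt \in \R^2_0$ and to verify the two claims by exploiting the exact scaling behaviour of $\rho$. First I would record the elementary identity $\rho(\lambda^{1/q_1} t_1, \lambda^{1/q_2} t_2) = \lambda \rho(\mbt)$ for all $\lambda > 0$, which is immediate from the definition $\rho(\mbt) = |t_1|^{q_1} + |t_2|^{q_2}$ in \eqref{rh}. Writing $\mbt^\lambda := (\lambda^{1/q_1} t_1, \lambda^{1/q_2} t_2)$, this gives $L(\mbt^\lambda) = \rho(\mbt^\lambda) h(\mbt^\lambda) = \lambda \rho(\mbt) h(\mbt^\lambda)$. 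By the generalized homogeneity \eqref{anishomo} we have $\lambda h(\mbt^\lambda) = h(\mbt)$, so that $L(\mbt^\lambda) = \rho(\mbt) h(\mbt) = L(\mbt)$ for every $\lambda > 0$. Hence $\lambda \mapsto L(\mbt^\lambda)$ is constant on $\R_+$, i.e.\ $L$ is generalized invariant, and since $h = L/\rho$ by construction, this proves \eqref{hrho}.

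Next, to establish \eqref{Lpm} I would use the invariance of $L$ to move $\mbt$ onto the unit level set $\{\mbu \in \R^2_0 : \rho(\mbu) = 1\}$. For fixed $\mbt$ with $t_2 \ne 0$, the choice $\lambda = \rho(\mbt)^{-1}$ yields $\rho(\mbt^\lambda) = \lambda \rho(\mbt) = 1$ and $\mbt^\lambda = (t_1/\rho(\mbt)^{1/q_1}, t_2/\rho(\mbt)^{1/q_2})$. Setting $z := t_1/\rho(\mbt)^{1/q_1}$, the bound $|t_1|^{q_1} \le \rho(\mbt)$ gives $z \in [-1,1]$, so that $L_\pm$ is evaluated within its domain. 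The constraint $\rho(\mbt^\lambda) = 1$ forces $|t_2/\rho(\mbt)^{1/q_2}|^{q_2} = 1 - |z|^{q_1}$, while the sign of the second coordinate equals $\operatorname{sign}(t_2)$ because $\lambda^{1/q_2} > 0$; thus $\mbt^\lambda = (z, \operatorname{sign}(t_2)(1 - |z|^{q_1})^{1/q_2})$. Invariance of $L$ together with $\rho(\mbt^\lambda) = 1$ then gives $L(\mbt) = L(\mbt^\lambda) = h(\mbt^\lambda) = h(z, \operatorname{sign}(t_2)(1-|z|^{q_1})^{1/q_2})$, which is exactly $L_{\operatorname{sign}(t_2)}(z)$ by the definition of $L_\pm$, establishing \eqref{Lpm}.

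The argument is essentially bookkeeping in the scaling relation, so I do not anticipate a genuine obstacle; the only points requiring care are tracking the sign of the second coordinate under the positive factor $\lambda^{1/q_2}$ and checking that the normalized first coordinate $z$ stays in $[-1,1]$. I would also note in passing that the two angular functions match at the endpoints, since $L_+(\pm 1) = h(\pm 1, 0) = L_-(\pm 1)$ as $(1-|z|^{q_1})^{1/q_2}$ vanishes at $z = \pm 1$; this is consistent with the hypotheses on $L_\pm$ in Assumption B, although \eqref{Lpm} as stated concerns only $t_2 \ne 0$ and needs nothing beyond the normalization above.
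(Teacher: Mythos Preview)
Your proof is correct and follows essentially the same approach as the paper: both define $L(\mbt)=\rho(\mbt)h(\mbt)$ (the paper arrives at this by substituting $\lambda=1/\rho(\mbt)$ into \eqref{anishomo}), verify generalized invariance from the scaling of $\rho$, and then obtain \eqref{Lpm} by normalizing to $\rho=1$ and solving for the second coordinate via its sign. Your version is simply more explicit about the bookkeeping (the range of $z$, the sign preservation under $\lambda^{1/q_2}>0$), but there is no substantive difference.
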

\begin{proof}
\eqref{hrho}  follows from \eqref{anishomo}, by taking $\lambda = 1/\rho(\mbt). $ Then
$L(\mbt)  := h( t_1/\rho(\mbt)^{1/q_1}, t_2/\rho(\mbt)^{1/q_2}) $ is a generalized invariant function. Whence,
\eqref{Lpm} follows since $  t_2/\rho(\mbt)^{1/q_2} = \operatorname{sign}(t_2) (1 -  |t_1/\rho(\mbt)^{1/q_1}|^{q_1})^{1/q_2} $.
\end{proof}

The notion of generalized homogeneous function was introduced in \cite{han1972}. The last paper also obtained
a representation of such functions different from \eqref{hrho}.
Note ${\mbt} \mapsto (t_1, \rho(\mbt)^{1/q_1}) $
is a 1-1 transformation of the upper half-plane $\{{\mbt} \in \R^2: t_2 \ge 0 \}$ onto itself.
Following \cite{dob1979}, the form in \eqref{hrho} will be called the {\it polar representation of $h$.}
The two  factors in  \eqref{hrho}, viz.,
$\rho^{-1}$ and $L$
are called the {\it radial}  and
{\it angular} functions, respectively.  Note that $h$ being strictly positive and continuous on $\R^2_0$ is equivalent
to $L_\pm$ both being strictly positive and continuous on $[-1,1]$ with $L_+(\pm 1) = L_-(\pm 1)$.

\subsection{Dependence axis }

\begin{defn} \label{depdef}
{\rm Let  $g \colon \R^2 \to \R $ be a measurable function. We say that a line passing through the origin and given by $\{ \mbt \in \R^2 : \mba \cdot \mbt = 0 \}$ with $\mba \in \R^2_0$
is the {\it dependence axis of $g$} if for all $\mbc \in \R^2_0$ such that $a_1 c_2  \ne c_1 a_2$,
\begin{eqnarray}
\liminf_{|\mbt| \to \infty, \, {\mbc}\cdot {\mbt} = 0} \frac{\log (1/|g({\mbt})|)}{\log |{\mbt}|}
 &>& \limsup_{|{\mbt}| \to \infty, \, {\mba}\cdot {\mbt} = 0} \frac{\log (1/|g({\mbt})|)}{\log |{\mbt}|}.
\end{eqnarray}
We say that a line $\{ \mbt \in \R^2 : \mba \cdot \mbt = 0 \}$ with $\mba \in \R^2_0$
is the  {\it dependence axis of
$g : \Z^2 \to \R$} if this line is the dependence axis of
$g(\lfloor {\mbt} \rfloor)$, $\mbt \in \R^2$.
}
\end{defn}

\begin{prop} \label{depaxis} Let $g \colon \Z^2 \to \R $ satisfy
\begin{equation}\label{gcoefL}
g(\mbt) = \rho(B\mbt)^{-1} (L(B \mbt) + o(1)), \quad |\mbt| \to \infty,
\end{equation}
where $B = (b_{ij})_{i,j=1,2}$ is a $2\times 2 $ nondegenerate  matrix, $\rho(\mbt) := |t_1|^{q_1} + |t_2|^{q_2}$, $\mbt \in \R^2$, with $q_i >0$, $i=1,2$, and $L : \R_0^2 \to \R$ satisfies Assumption B.
In addition,
\smallskip

\noi (i) let $q_1 < q_2 $ and $|L_+(1)|  =  |L_-(1)| >0$, $|L_+(-1)|  = |L_-(-1)| > 0$. Then the dependence axis of $g$ is $\{ \mbt \in \R^2 : {\mbb}_2\cdot {\mbt} = 0\}$ with $\mbb_2 =  (b_{21},b_{22})^\top$;

\smallskip

\noi (ii) let $q_1 > q_2 $ and $|L_+ (0)|>0$, $|L_-(0)| > 0$. Then the dependence axis of $g$ is $\{\mbt \in \R^2 : {\mbb}_1\cdot {\mbt} = 0 \}$ with $\mbb_1 =  (b_{11},b_{12})^\top$.

 \end{prop}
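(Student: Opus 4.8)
The plan is to reduce the statement to a one-dimensional comparison of decay rates: for every line through the origin I will determine the polynomial rate at which $|g|$ decays along that line, and show it is governed by the homogeneity exponent of $\rho(B\,\cdot\,)$ restricted to the line, with the angular factor $L$ contributing only a nonzero multiplicative constant on the extremal directions. Throughout I parametrize a line $\{\mbt\in\R^2:\mbc\cdot\mbt=0\}$, $\mbc\in\R^2_0$, by $\mbt=r\mbv$ with a fixed direction $\mbv\in\R^2_0$ obeying $\mbc\cdot\mbv=0$ and $r\in\R$, $|r|\to\infty$. Since Definition \ref{depdef} applies the notion to $g(\lfloor\mbt\rfloor)$, I first note that $\lfloor r\mbv\rfloor=r\mbv+O(1)$ componentwise, so that $B\lfloor r\mbv\rfloor=r\boldsymbol{w}+O(1)$ with $\boldsymbol{w}:=B\mbv\neq\boldsymbol{0}$ (as $B$ is nondegenerate and $\mbv\neq\boldsymbol{0}$); this bounded perturbation affects no leading-order asymptotics, and neither does replacing $|\mbt|$ by $|\lfloor\mbt\rfloor|$.

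Next I would record the radial asymptotics. From
\[
\rho(r\boldsymbol{w})=|r|^{q_1}|w_1|^{q_1}+|r|^{q_2}|w_2|^{q_2}
\]
and $q_1<q_2$ (part (i)), the dominant exponent as $|r|\to\infty$ is $q_2$ when $w_2\neq0$ and $q_1$ when $w_2=0$. Because $|L|$ is bounded on $\R^2_0$, the asymptotic \eqref{gcoefL} gives the universal bound $|g(\lfloor\mbt\rfloor)|\le C\rho(B\lfloor\mbt\rfloor)^{-1}\le C'|r|^{-q_2}$ along any direction with $w_2\neq0$, whence $\liminf_{|\mbt|\to\infty,\,\mbc\cdot\mbt=0}\log(1/|g|)/\log|\mbt|\ge q_2$. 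The condition $w_2=\mbb_2\cdot\mbv=0$ singles out exactly the direction of the line $\{\mbt:\mbb_2\cdot\mbt=0\}$, and a line is not parallel to it precisely when $\mbc$ is not parallel to $\mbb_2$, i.e.\ $b_{21}c_2\neq c_1b_{22}$ — which is the hypothesis imposed in Definition \ref{depdef} with $\mba=\mbb_2$. Thus every admissible $\mbc$ has $w_2\neq0$ and satisfies the bound just obtained.

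It remains to pin the exact exponent on the candidate axis, where the nonvanishing hypothesis enters. On $\{\mbt:\mbb_2\cdot\mbt=0\}$ we have $w_2=0$, so the components of $\mbu:=B\lfloor\mbt\rfloor$ satisfy $u_1=rw_1+O(1)$ (growing like $|r|$) and $u_2=O(1)$ (bounded); hence $\rho(\mbu)^{1/q_1}=|u_1|(1+o(1))$ and the angular argument $u_1/\rho(\mbu)^{1/q_1}\to\operatorname{sign}(u_1)=\pm1$. Using $L_+(\pm1)=L_-(\pm1)$ from Assumption B (so the undetermined sign of $u_2$ in \eqref{angL} is immaterial) together with $|L_+(\pm1)|=|L_-(\pm1)|>0$, I obtain $L(B\lfloor\mbt\rfloor)+o(1)\to L_+(\pm1)\neq0$, so $|g(\lfloor r\mbv\rfloor)|=c|r|^{-q_1}(1+o(1))$ with $c>0$ and $\limsup\log(1/|g|)/\log|\mbt|=q_1$. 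Combining,
\[
\liminf_{|\mbt|\to\infty,\,\mbc\cdot\mbt=0}\frac{\log(1/|g(\lfloor\mbt\rfloor)|)}{\log|\mbt|}\ \ge\ q_2\ >\ q_1\ =\ \limsup_{|\mbt|\to\infty,\,\mbb_2\cdot\mbt=0}\frac{\log(1/|g(\lfloor\mbt\rfloor)|)}{\log|\mbt|},
\]
which is exactly the required inequality, proving (i). Part (ii) is entirely analogous with the roles of $q_1,q_2$ and of the two coordinates interchanged: now $q_1>q_2$, the slow direction is $w_1=\mbb_1\cdot\mbv=0$, the angular argument tends to $0$ instead of $\pm1$, and one invokes $|L_+(0)|,|L_-(0)|>0$.

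The step I expect to be most delicate is the angular bookkeeping on the axis in part (i): the candidate axis is precisely the locus $u_2=0$, so the indicator $\1(u_2\ge0)$ versus $\1(u_2<0)$ in \eqref{angL} is \emph{not} determined by the leading asymptotics, and the $O(1)$ floor perturbation of $u_2$ can flip its sign. Showing that the angular factor nonetheless has a well-defined nonzero limit hinges on the matching condition $L_+(\pm1)=L_-(\pm1)$; the remaining estimates are a routine comparison of powers of $|r|$.
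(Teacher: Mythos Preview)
Your proof is correct and follows essentially the same approach as the paper's: both establish that along the candidate axis $\mbb_2\cdot\mbt=0$ one has $|g(\lfloor\mbt\rfloor)|\asymp|\mbt|^{-q_1}$ (using the nonvanishing of $L_\pm(\pm1)$ and the matching condition $L_+(\pm1)=L_-(\pm1)$ to handle the undetermined sign of $u_2$), while along any other line $|g(\lfloor\mbt\rfloor)|=O(|\mbt|^{-q_2})$, yielding the strict inequality $q_2>q_1$ required by Definition~\ref{depdef}. The paper additionally records the explicit limiting constant on the axis, but this is not needed for the statement.
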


\begin{proof}
It suffices to show part (i) only since (ii) is analogous.
Below we prove that
\begin{align}\label{C1}
\lim_{|{\mbt}|\to \infty, \,  {\mbb}_2\cdot {\mbt} = 0} |{\mbt}|^{q_1} \,
|g(\lfloor {\mbt}\rfloor)| &= \frac{|\mbb_2|^{q_1}}{|\operatorname{det}(B)|^{q_1}}
\begin{cases}L_\pm(1), &{\mbb}_1\cdot {\mbt} \to + \infty, \\
L_\pm(-1), &{\mbb}_1\cdot {\mbt} \to - \infty,
\end{cases}\\
\limsup_{|{\mbt}|\to \infty, \,  {\mbc} \cdot {\mbt} = 0} |{\mbt}|^{q_2}\, |g(\lfloor {\mbt}\rfloor)| &< \infty, \quad \forall
{\mbc} \in \R_0^2, \ b_{21} c_2  \ne c_1 b_{22}. \label{C2}
\end{align}
Note \eqref{C1} implies  $\lim_{|{\mbt}| \to \infty,\, {\mbb}_2\cdot {\mbt} = 0} \frac{\log (1/|g(\lfloor {\mbt}\rfloor)|)}{\log |{\mbt}|} = q_1 $
while \eqref{C2} implies $\liminf_{|{\mbt}| \to \infty, \, {\mbc}\cdot {\mbt} = 0} \frac{\log (1/|g(\lfloor {\mbt}\rfloor )|)}{\log |{\mbt}|}
\ge q_2 $, hence the statement of the proposition.

Let us prove \eqref{C1}. We have 
\begin{equation*}
\frac{{\mbb}_1 \cdot {\mbt }}
{\rho( B \lfloor {\mbt} \rfloor )^{1/q_1}}
= \frac{\operatorname{sign}({\mbb}_1 \cdot {\mbt }) }{ (| {\mbb}_1 \cdot
\lfloor {\mbt } \rfloor/|{\mbb}_1 \cdot {\mbt }| |^{q_1}
+ |{\mbb}_2 \cdot \lfloor {\mbt }\rfloor|^{q_2}/ |{\mbb}_1 \cdot {\mbt }|^{q_1} )^{1/q_1} } \
\to\ \pm 1 \quad \text{as} \quad  {\mbb}_1 \cdot {\mbt } \to \pm \infty
\end{equation*}
since $ |{\mbb}_2 \cdot \lfloor {\mbt }\rfloor|   = O(1) $ on  ${\mbb}_2\cdot {\mbt} = 0$.  In a similar way,
$\lim_{|{\mbt}|\to \infty, \,  {\mbb}_2\cdot {\mbt} = 0} |{\mbt}|^{q_1} \rho(B \lfloor {\mbt} \rfloor)^{-1}= (\frac{|b_{21}| + |b_{22}|}{|\operatorname{det}(B)|})^{q_1}
$. Whence, \eqref{C1} follows by the asymptotic form
of $g$ and the assumption of the continuity of $L_\pm$.

Consider \eqref{C2}. In view of \eqref{gcoefL} and the boundedness of $L_\pm $ it suffices to show
\eqref{C2} for  $\rho(B{\mbt})^{-1}$ in place of $g({\mbt})$, $\mbt \in \Z^2$.
Then 
$|{\mbt}|^{q_2} \rho(B \lfloor {\mbt} \rfloor)^{-1} =
( \frac{| {\mbb}_1 \cdot
\lfloor {\mbt } \rfloor |^{q_1}}{|{\mbt }|^{q_2}}
+ \frac{|{\mbb}_2 \cdot \lfloor {\mbt }\rfloor|^{q_2}}{|{\mbt }|^{q_2}} )^{-1}$, where
$\frac{| {\mbb}_1 \cdot
\lfloor {\mbt } \rfloor |^{q_1}}{|{\mbt }|^{q_2}} \to 0$ and
$\frac{|{\mbb}_2 \cdot \lfloor {\mbt }\rfloor|}{|{\mbt }|} \to \frac{|b_{21} c_2 - b_{22} c_1|}{|c_1| + |c_2|} > 0$,
proving \eqref{C2}.
\end{proof}

Below, we show that the dependence axis is preserved under `discrete' convolution
$[g_1 \star g_2] (\mbt) := \sum_{\mbu \in \Z^2} g_1(\mbu) $  $ g_2(\mbu + \mbt)$, $\mbt \in \Z^2$,
of two functions $g_i: \Z^2 \to \R$, $i=1,2$.

\begin{prop} \label{conaxis} For $i=1,2$, let $g_i \colon \Z^2 \to \R$ satisfy
\begin{equation}
g_i(\mbt) = \rho(B\mbt)^{-1} (L_i(B \mbt) + o(1)), \quad |\mbt| \to \infty,
\end{equation}
where $B$ is a $2\times 2$ nondegenerate matrix, $\rho$ with $Q = q_1^{-1} + q_2^{-1} \in (1,2)$ and $L_i$ are functions as in Assumption~B. For $i=1,2$, let $a_{\infty,i} (\mbt) :=  \rho(\mbt)^{-1} L_i(\mbt)$, $\mbt \in \R^2_0$.
Then
\begin{equation}\label{gg}
[g_1 \star g_2](\mbt) = \tilde \rho(B \mbt)^{-1} (\tilde L ( B \mbt) + o(1)), \quad |\mbt| \to \infty,
\end{equation}
where $\tilde \rho (\mbt) := |t_1|^{\tilde q_1} + |t_2|^{\tilde q_2}$, $\mbt \in \R^2$, with $\tilde q_i := q_i (2-Q)$, $i=1,2$, and
\begin{equation}\label{tildeL}
\tilde L(\mbt) := |\operatorname{det}(B)|^{-1} (a_{\infty,1} \star a_{\infty,2} ) ( t_1/\tilde \rho (\mbt)^{1/\tilde q_1}, t_2/\tilde \rho (\mbt)^{1/\tilde q_2} ),
\quad \mbt \in \R^2_0,
\end{equation}
is a generalized invariant function in the sense of Definition \ref{defhomo} (ii) (with
$q_i$ replaced by $\tilde q_i$, $i=1,2$). Moreover, if $L_1 = L_2 \ge 0$ then $\tilde L$ is strictly positive. 
\end{prop}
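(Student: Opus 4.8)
The plan is to reduce the discrete convolution to the \emph{continuous} convolution of the limiting coefficient functions and then to read off \eqref{gg}--\eqref{tildeL} from the generalized homogeneity of the latter. Write $a_{\infty,i}(\mbt) = \rho(\mbt)^{-1}L_i(\mbt)$, $\mbt\in\R^2_0$; by Definition~\ref{defhomo}(i) each $a_{\infty,i}$ is generalized homogeneous of degree one, i.e.\ $a_{\infty,i}(\lambda^{1/q_1}t_1,\lambda^{1/q_2}t_2)=\lambda^{-1}a_{\infty,i}(\mbt)$ for all $\lambda>0$. Since $g_i(\mbt)=a_{\infty,i}(B\mbt)+o(\rho(B\mbt)^{-1})$ and $|g_i(\mbt)|\le C\max\{\rho(B\mbt),1\}^{-1}$, the first step is to replace each $g_i$ by its leading term $a_{\infty,i}(B\,\cdot\,)$ inside $[g_1\star g_2](\mbt)=\sum_{\mbu\in\Z^2}g_1(\mbu)\,g_2(\mbu+\mbt)$; the resulting errors are controlled by the integrability estimates \eqref{intrho} (valid because $1<Q<2$), exactly as the $o(1)$ terms are handled in the existence proofs of Section~3.

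First I would pass from the lattice sum to an integral and change variables $\mbv=B\mbu$ (Jacobian $|\operatorname{det}(B)|^{-1}$), obtaining
\begin{equation*}
[g_1\star g_2](\mbt)=|\operatorname{det}(B)|^{-1}\,w(B\mbt)\,(1+o(1)),\qquad
w(\mbz):=\int_{\R^2} a_{\infty,1}(\mbv)\,a_{\infty,2}(\mbv+\mbz)\,\d\mbv,
\end{equation*}
as $|\mbt|\to\infty$, where $w=a_{\infty,1}\star a_{\infty,2}$ is finite for $\mbz\neq\boldsymbol{0}$ by $|a_{\infty,i}|\le C\rho^{-1}$ and \eqref{def:conv}. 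To identify the polar form I would compute the scaling of $w$: substituting $\mbv\mapsto(\lambda^{1/q_1}v_1,\lambda^{1/q_2}v_2)$ and invoking the degree-one homogeneity of $a_{\infty,1},a_{\infty,2}$ gives $w(\lambda^{1/q_1}z_1,\lambda^{1/q_2}z_2)=\lambda^{Q-2}w(\mbz)$; reparametrising by $\mu=\lambda^{2-Q}$, so that $\lambda^{1/q_i}=\mu^{1/\tilde q_i}$ with $\tilde q_i=q_i(2-Q)$, this reads
\begin{equation*}
w(\mu^{1/\tilde q_1}z_1,\mu^{1/\tilde q_2}z_2)=\mu^{-1}\,w(\mbz),\qquad \mu>0,
\end{equation*}
so $w$ is generalized homogeneous of degree one with respect to the exponents $\tilde q_i$. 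By the polar representation \eqref{hrho}--\eqref{Lpm} (applied with $q_i$ replaced by $\tilde q_i$) one then has $w(\mbz)=\tilde\rho(\mbz)^{-1}\tilde L_0(\mbz)$ with $\tilde L_0(\mbz):=w(z_1/\tilde\rho(\mbz)^{1/\tilde q_1},z_2/\tilde\rho(\mbz)^{1/\tilde q_2})$ generalized invariant; setting $\tilde L:=|\operatorname{det}(B)|^{-1}\tilde L_0$ and $\mbz=B\mbt$ yields \eqref{gg} and \eqref{tildeL}. The generalized invariance of $\tilde L$ (Definition~\ref{defhomo}(ii) with exponents $\tilde q_i$) is then immediate from its defining formula, since $\tilde\rho(\mu^{1/\tilde q_1}z_1,\mu^{1/\tilde q_2}z_2)=\mu\,\tilde\rho(\mbz)$ leaves the arguments $z_i/\tilde\rho(\mbz)^{1/\tilde q_i}$ unchanged.

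The hard part will be making the sum-to-integral passage rigorous, because the integrand $\mbv\mapsto a_{\infty,1}(\mbv)\,a_{\infty,2}(\mbv+B\mbt)$ is singular at $\mbv=\boldsymbol{0}$ and at $\mbv=-B\mbt$, where the naive Riemann-sum comparison breaks down. I would use the standard $\epsilon$-splitting: fix $\epsilon>0$ and decompose $\Z^2$ into an anisotropic neighbourhood of $\boldsymbol{0}$, one of $-\mbt$ (each of $\rho$-radius $\epsilon$ times the scale of $B\mbt$), and the remaining bulk. On the bulk the summand varies slowly on the unit scale, so the sum converges to the corresponding integral as $|\mbt|\to\infty$; on each singular neighbourhood both the sum and the integral are $O(\epsilon^{\alpha}\tilde\rho(B\mbt)^{-1})$ with $\alpha=Q-1>0$, using \eqref{intrho}, the scaling $\int_{\rho\le\delta}\rho^{-1}=C\delta^{Q-1}$, and the relation $\rho(B\mbt)^{Q-2}\asymp\tilde\rho(B\mbt)^{-1}$ along rays. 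Letting $|\mbt|\to\infty$ first and then $\epsilon\downarrow0$ closes the estimate. This is precisely the mechanism behind \eqref{def:conv} and (\cite{pils2017},~(5.6)), here carried out for two distinct angular functions $L_1,L_2$ and with the linear substitution $B$; the same splitting also absorbs the $o(1)$ remainders from the first step.

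Finally, for the strict positivity when $L_1=L_2=:L\ge0$ with $L\not\equiv0$: then $a_{\infty,1}=a_{\infty,2}=:a_\infty\ge0$ is continuous and strictly positive on the open cone $\{\mbv\in\R^2_0:L(\mbv)>0\}$. For any $\mbz\in\R^2_0$, since this cone is invariant under positive dilations it absorbs the fixed shift $\mbz$ far along its generating directions, so the set $\{\mbv:a_\infty(\mbv)>0,\ a_\infty(\mbv+\mbz)>0\}$ has positive Lebesgue measure; hence $w(\mbz)>0$ for every $\mbz\neq\boldsymbol{0}$, and therefore $\tilde L>0$.
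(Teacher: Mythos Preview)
Your approach is essentially the paper's: reduce the discrete convolution to the continuous convolution $w=a_{\infty,1}\star a_{\infty,2}$, then read off the polar form from generalized homogeneity. The paper organizes the passage slightly differently: it splits $g_i=g_i^0+g_i^1$ with $g_i^0(\mbt):=(1\vee\rho(B\mbt))^{-1}L_i(B\mbt)$, treats the cross terms $[g_1^k\star g_2^j]$, $(k,j)\ne(0,0)$, as $o(\tilde\rho(B\mbt)^{-1})$ directly, and for the main term writes the sum as an exact integral $\int_{\R^2}g_1^0(\lceil\mbu\rceil)g_2^0(\lceil\mbu\rceil+\mbt)\,\d\mbu$ and then performs a single anisotropic change of variables $\mbu\mapsto B^{-1}R_{\tilde\varrho}\mbu$ with $R_{\tilde\varrho}=\operatorname{diag}(\tilde\varrho^{1/\tilde q_1},\tilde\varrho^{1/\tilde q_2})$, $\tilde\varrho=\tilde\rho(B\mbt)$. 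This collapses everything to a pointwise limit plus dominated convergence uniformly over the normalized sphere $\{\tilde\rho(\mbz)=1\}$, avoiding the explicit $\epsilon$-splitting and the separate bulk/near-singularity/far-field bookkeeping that your route requires. Both work; the paper's rescaling is cleaner because the singularities and the decay at infinity are handled simultaneously by one majorant.

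Two small points. First, writing $[g_1\star g_2](\mbt)=|\operatorname{det}(B)|^{-1}w(B\mbt)(1+o(1))$ is unsafe when $w$ (equivalently $\tilde L$) can vanish; the paper states the error additively as $\tilde\rho(B\mbt)^{-1}o(1)$, which is what \eqref{gg} actually asserts. Second, your comparability $\rho(B\mbt)^{Q-2}\asymp\tilde\rho(B\mbt)^{-1}$ holds globally on $\R^2_0$ (since $0<2-Q<1$ gives $(a+b)^{2-Q}\asymp a^{2-Q}+b^{2-Q}$), not merely ``along rays''; you may as well use it in that form.
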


\begin{proof}
We follow the proof in  (\cite{pils2017}, Prop.\ 5.1 (iii)). For $\mbt \in \Z^2$, split every $g_i(\mbt)$ as a sum of $g^1_i(\mbt) :=  g_i(\mbt) - g^0_i(\mbt) $ and
$g^0_i(\mbt) := (1 \vee \rho(B\mbt))^{-1} L_i ( B \mbt)$ using the convention $g_i^0(\boldsymbol{0}) = L_i (\boldsymbol{0}) := 0$. Then $[g_1 \star g_2](\mbt) = \sum_{k,j=0}^1 [g^k_1 \star g^j_2](\mbt)$
and \eqref{gg} follows from
\begin{equation}\label{gg0}
\lim_{|\mbt|\to \infty} \big| \tilde \rho(B \mbt) [g^0_1 \star g^0_2](\mbt) - \tilde L ( B\mbt ) \big| = 0
\end{equation}
and
\begin{equation}\label{gg1}
\tilde \rho(B \mbt) [g^k_1 \star g^j_2](\mbt)= o(1), \quad |\mbt| \to \infty, \quad (k,j) \ne (0,0).
\end{equation}
To  prove \eqref{gg0} we write the `discrete' convolution  as integral
$[g^0_1 \star g^0_2](\mbt) = \int_{\R^2} g^0_1(\lceil \mbu \rceil ) g^0_2 (\lceil \mbu \rceil +  \mbt) \d \mbu
$, where we change a variable: $\mbu 
\to B^{-1} R_{\tilde \varrho}  \mbu$ with
$$
\mbt' := B \mbt, \quad \tilde \varrho :=
\tilde \rho(\mbt'), \quad R_{\tilde \varrho} := \operatorname{diag}
( \tilde \varrho^{1/\tilde q_1}, \tilde \varrho^{1/\tilde q_2}).
$$
Then with $\tilde Q:=\tilde q_1^{-1} + \tilde q_2^{-1}$ we have
\begin{align*}
\tilde \rho(B \mbt) [g^0_1 \star  g^0_2](\mbt)
&=|\operatorname{det}(B)|^{-1} \tilde \varrho^{1+\tilde Q}
\int_{\R^2} \frac{ L_1 ( B \lceil B^{-1} R_{\tilde \varrho} \mbu \rceil ) } {\rho ( B \lceil B^{-1} R_{\tilde \varrho} \mbu \rceil ) \vee 1}
\cdot \frac{L_2 ( B \lceil  B^{-1} R_{\tilde \varrho} \mbu \rceil + \mbt' )}
{\rho ( B \lceil B^{-1} R_{\tilde \varrho} \mbu \rceil + \mbt') \vee 1}
 \d \mbu \\
&= |\operatorname{det}(B)|^{-1} \int_{\R^2} g_{\tilde \varrho, \mbz} (\mbu) \d \mbu, \quad \mbz = R_{\tilde \varrho}^{-1} \mbt',
\end{align*}
where for all $\tilde \rho>0$, $\mbz \in \R^2$ such that $\tilde \rho (\mbz) = 1$, $\mbu \in \R^2$,
\begin{equation*}
g_{\tilde \varrho, \mbz} (\mbu)
:=\frac{ L_1 ( R^{-1}_{\tilde \varrho} B \lceil B^{-1} R_{\tilde \varrho} \mbu \rceil ) }
{\rho ( R^{-1}_{\tilde \varrho} B \lceil B^{-1} R_{\tilde \varrho} \mbu \rceil ) \vee \tilde \varrho^{-q_1/\tilde q_1}}
\cdot \frac{L_2 ( R^{-1}_{\tilde \varrho} B \lceil  B^{-1} R_{\tilde \varrho} \mbu \rceil +   \mbz )}
{\rho ( R^{-1}_{\tilde \varrho} B \lceil B^{-1} R_{\tilde \varrho} \mbu \rceil +  \mbz) \vee \tilde \varrho^{-q_1/\tilde q_1} }
\end{equation*}
and we used generalized homogeneous and generalized invariance properties of $\rho $ and $L_i$, $i=1,2$, and
the facts that $q_1/\tilde q_1 = q_2/\tilde q_2$, $1 + \tilde Q = 2 q_1/\tilde q_1 = 2/(2-Q)$. Whence using
continuity of  $\rho $ and $L_i$, $i=1,2$, it follows that $g_{\tilde \varrho, \mbz} (\mbu) -
a_{\infty,1} (\mbu) a_{\infty,2} (\mbu + \mbz) \to  0$ as $ \tilde \varrho \to \infty $ or $|\mbt| \to \infty $
for all $\mbu\in \R^2_0$, $\mbu + \mbz \in \R^2_0$.
Then similarly as in (\cite{pils2017}, (7.8)) we conclude that
$\sup_{\mbz \in \R^2:  \rho(\mbz) =1  } |\int_{\R^2} g_{\tilde \varrho, \mbz} (\mbu) \d \mbu -  (a_{\infty,1} \star a_{\infty,2})(\mbz)| \to 0$,
$\tilde \varrho \to \infty$, and \eqref{gg0} holds. The remaining details including the proof of \eqref{gg1}
are similar to those in \cite{pils2017}. Proposition \ref{conaxis} is proved.
\end{proof}

\begin{cor}\label{cordep}
Let $X$ be a linear RF on $\Z^2 $ satisfying Assumptions A, B and having a covariance function
$r_X (\mbt) := \E X(\boldsymbol{0})X(\boldsymbol{t})   = [b \star b ](\mbt)$, $\mbt \in \Z^2$. Then
\begin{eqnarray} \label{covX}
&r_X(\mbt) = \tilde \rho(B \mbt)^{-1} (\tilde L(B \mbt) + o(1)), \quad |\mbt| \to \infty,
\end{eqnarray}
where $\tilde \rho $, $\tilde L$ are as in \eqref{gg}, \eqref{tildeL} (with $a_{\infty,1} =  a_{\infty,2} = a_\infty $ of \eqref{ainfty}).
Particularly, if $q_1 \ne q_2$ and $L_\pm $ satisfy the conditions in Proposition \ref{depaxis}, the dependence axes
of the covariance function $r_X$ in \eqref{covX} and the moving-average coefficients $b$  in \eqref{bcoefL} coincide.
\end{cor}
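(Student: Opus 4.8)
The plan is to recognise $r_X$ as a self-correlation of the coefficients $b$ and then to transfer both assertions to Propositions~\ref{conaxis} and~\ref{depaxis}. First I would compute the covariance directly: by Assumption~A the innovations satisfy $\E[\vep(\mbs)\vep(\mbs')]=\1(\mbs=\mbs')$, so inserting $X(\boldsymbol{0})=\sum_{\mbs}b(-\mbs)\vep(\mbs)$ and $X(\mbt)=\sum_{\mbs'}b(\mbt-\mbs')\vep(\mbs')$ and then substituting $\mbu=-\mbs$ gives $r_X(\mbt)=\sum_{\mbs}b(-\mbs)b(\mbt-\mbs)=\sum_{\mbu}b(\mbu)b(\mbu+\mbt)=[b\star b](\mbt)$. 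The sum converges absolutely because $b\in\ell^2(\Z^2)$, which in turn follows from \eqref{bcoef}, \eqref{rhoineq} and the second equivalence in \eqref{intrho} (valid since $Q<2$). This is exactly the setting of Proposition~\ref{conaxis} with $g_1=g_2=b$, so that $L_1=L_2=L$ and $a_{\infty,1}=a_{\infty,2}=a_\infty$ of \eqref{ainfty}; then \eqref{gg}--\eqref{tildeL} yield \eqref{covX} verbatim, settling the first assertion.

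For the second assertion I would observe that, by \eqref{covX}, $r_X$ has precisely the form \eqref{gcoefL} demanded by Proposition~\ref{depaxis}, the only change being that the radial exponents $q_i$ are replaced by $\tilde q_i=q_i(2-Q)$ and the angular function $L$ by $\tilde L$. Since $1<Q<2$ gives $2-Q\in(0,1)$, the ordering is preserved, i.e.\ $\tilde q_1<\tilde q_2\Leftrightarrow q_1<q_2$; moreover by Proposition~\ref{conaxis} the function $\tilde L$ is continuous and generalized invariant, with boundary values independent of the branch sign, so it meets the structural part of Assumption~B. Hence, as soon as $\tilde L$ is shown to be nonvanishing at the endpoints invoked by Proposition~\ref{depaxis}, that proposition applied to $r_X$ returns the line $\{\mbt:\mbb_2\cdot\mbt=0\}$ for $q_1<q_2$ (resp.\ $\{\mbt:\mbb_1\cdot\mbt=0\}$ for $q_1>q_2$), which is exactly the dependence axis of $b$ furnished by Proposition~\ref{depaxis}; the two axes then coincide. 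The hypotheses on $L_\pm$ borrowed from Proposition~\ref{depaxis} are precisely what guarantee that $b$ itself possesses this axis.

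The hard part will be the endpoint nonvanishing of $\tilde L$. For $q_1<q_2$ it amounts to $\tilde L(1,0)\neq0$ and $\tilde L(-1,0)\neq0$, the boundary values of $\tilde L$ along the $u_2=0$ axis; by \eqref{tildeL} with $a_{\infty,1}=a_{\infty,2}=a_\infty$ and $\tilde\rho(\pm1,0)=1$ these equal
\[
\tilde L(\pm1,0)=|\operatorname{det}(B)|^{-1}(a_\infty\star a_\infty)(\pm1,0)=|\operatorname{det}(B)|^{-1}\int_{\R^2}a_\infty(\mbs)\,a_\infty(s_1\pm1,s_2)\,\d\mbs,
\]
with the symmetric values $\tilde L(0,\pm1)$ entering when $q_1>q_2$. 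When the angular functions are nonnegative, $L_+=L_-=L\ge0$, this is immediate: the final sentence of Proposition~\ref{conaxis} gives that $\tilde L$ is \emph{strictly} positive, so the endpoint values are positive and the proof is complete, and this already covers the models of \cite{pils2017,sur2019b}. For genuinely signed $L_\pm$ one must exclude accidental cancellation in the integral above; one useful structural fact is that $r_X$ is a true covariance, hence even ($r_X(-\mbt)=r_X(\mbt)$, so $\tilde L(-1,0)=\tilde L(1,0)$) and positive-definite, which constrains the admissible signs. I expect this sign/positivity step — verifying $(a_\infty\star a_\infty)(\pm1,0)\neq0$ for signed angular functions — to be the only genuine difficulty, cleanly resolved under $L\ge0$ and otherwise requiring a direct estimate of the convolution; everything else in the proof is bookkeeping resting on the two appendix propositions.
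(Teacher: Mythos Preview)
Your approach matches the paper's: the corollary is stated there without proof and is meant to follow immediately from Proposition~\ref{conaxis} (applied with $g_1=g_2=b$, yielding \eqref{covX}) together with Proposition~\ref{depaxis} (identifying the axis), exactly as you outline. You are in fact more careful than the paper on the second assertion: the paper does not address the endpoint nonvanishing of $\tilde L$ that you correctly flag as a prerequisite for reapplying Proposition~\ref{depaxis} to $r_X$, and simply treats the coincidence of axes as self-evident once \eqref{covX} is in hand; your observation that this step is automatic under $L\ge 0$ via the last sentence of Proposition~\ref{conaxis}, but otherwise requires a separate check, is a valid refinement rather than a deviation.
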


\section*{Acknowledgments}

The authors thank Shanghai New York University for hosting their visits in April--May, 2019 during which this work was initiated and partially 
completed. 
Vytaut{\.e} Pilipauskait{\.e} acknowledges the financial support from the project ``Ambit fields: probabilistic properties and statistical inference'' funded by Villum Fonden. Also, Vytaut{\.e} Pilipauskait{\.e} gratefully acknowledges financial support of ERC Consolidator Grant 815703 ``STAMFORD: Statistical Methods for High Dimensional Diffusions''.

\bigskip

\bigskip

{\footnotesize

}


\begin{thebibliography}{99}





\bibitem{ber2013} Beran, J., Feng, Y., Gosh, S. and Kulik, R. (2013)
{\em Long-memory processes: Probabilistic properties and statistical methods.} Springer, New York.   





\bibitem{dam2017}  Damarackas, J. and Paulauskas, V. (2017)  Spectral covariance and limit theorems for random
fields with infinite variance. J. Multiv. Anal. 153, 156--175.


\bibitem{dam2019}  Damarackas, J. and Paulauskas, V. (2019)  Some remarks on scaling transition in limit
theorems for random fields. Preprint. Available at {\tt arXiv:1903.09399 [math.PR]}.




\bibitem{dav1970} Davydov, Y.A. (1970) The invariance principle for stationary
processes. Theor. Probab. Appl. 15, 487--498.  


\bibitem {dob1979} Dobrushin, R.L. (1979) Gaussian and their subordinated
self-similar random generalized fields.  Ann. Probab. 7,
1--28.




\bibitem {dobmaj1979} Dobrushin, R.L. and Major, P. (1979)
Non-central limit theorems for non-linear functionals of
Gaussian fields. Probab. Th. Rel. Fields
50, 27--52.  


\bibitem{dou2003}  Doukhan, P., Oppenheim, G.  and  Taqqu, M.S. (Eds.) (2003)
{\em Theory and Applications of Long-Range Dependence}.
Birkh\"auser, Boston.














\bibitem{book2012} Giraitis, L., Koul, H.L. and Surgailis, D. (2012)  {\em Large Sample Inference for
Long Memory Processes.}  Imperial College Press,  London.





\bibitem{han1972} Hankey, A. and Stanley, H.E.  (1972)
Systematic application of generalized homogeneous functions to static
scaling, dynamic scaling, and universality. Phys. Review B 6,
3515--3542.






\bibitem{lah2016} Lahiri, S.N. and Robinson, P.M. (2016)
Central limit theorems for long range dependent spatial linear processes.
Bernoulli 22, 345--375.



\bibitem{leo1999} Leonenko, N.N. (1999) {\em
Random Fields with Singular Spectrum}. Kluwer, Dordrecht. 






\bibitem{pils2014} Pilipauskait\.e, V. and Surgailis, D. (2014) Joint temporal and contemporaneous aggregation
of random-coefficient  AR(1) processes. Stochastic Process. Appl. 124, 1011--1035.



\bibitem{pils2016} Pilipauskait\.e, V.  and Surgailis, D. (2016) Anisotropic scaling of random grain model  with application
to network traffic. J. Appl. Probab. 53, 857--879.


\bibitem{pils2017} Pilipauskait\.e, V.  and Surgailis, D. (2017) Scaling  transition
for nonlinear random fields with long-range dependence. Stochastic Process. Appl.
127,  2751--2779.


\bibitem{pra1960} Pratt, J.W. (1960) On interchanging limits and integrals. Ann. Math. Statist.
31, 74--77.


\bibitem{ps2015} Puplinskait\.e, D.  and Surgailis, D. (2015)
Scaling  transition for long-range dependent Gaussian random fields.
Stochastic Process. Appl. 125, 2256--2271.



\bibitem{ps2016} Puplinskait\.e, D.  and Surgailis, D. (2016)
Aggregation of autoregressive random
fields and anisotropic long-range dependence.  Bernoulli 22, 2401--2441.



\bibitem{sam1994} Samorodnitsky, G. and Taqqu, M.S. (1994) {\it Stable
Non-Gaussian Random Processes.}  Chapman and Hall, London.










\bibitem {sur2019a} Surgailis, D. (2019)
Anisotropic scaling limits of long-range dependent
linear random fields on ${\Z}^3$.
J. Math. Anal. Appl. 472, 328--351.

\bibitem {sur2019b} Surgailis, D. (2019)
Scaling transition and edge effects for negatively dependent
linear random fields on ${\Z}^2$. Preprint. Available at {\tt arXiv:1904.05134 [math.PR]}.

\bibitem{sur2019c} Surgailis, D. (2019) Anisotropic scaling limits of long-range dependent
linear random fields. Lithuanian Math. J. 59, 595--615.


\end{thebibliography}
\end{document}